\subjclass[2010]{Primary: 55P15, 55P40; Secondary: 55P42, 55P65, 55P43}
\newtheorem{thm}{Theorem}[section]  
\newtheorem*{un-no-thm}{Theorem}
\newtheorem{cor}[thm]{Corollary}     
\newtheorem{lem}[thm]{Lemma}         
\newtheorem{prop}[thm]{Proposition}  
\newtheorem{add}[thm]{Addendum}
\newtheorem{bigthm}{Theorem}
\newtheorem{bigconj}[bigthm]{Conjecture}
\theoremstyle{definition}
\newtheorem{defn}[thm]{Definition}   
\theoremstyle{definition}
\theoremstyle{definition}
\theoremstyle{remark}
\newtheorem{rem}[thm]{Remark}
\newtheorem{hypo}[thm]{Hypothesis}
\newtheorem{notation}[thm]{Notation}
\newtheorem*{intro-rem}{Remark}
\newtheorem*{intro-rems}{Remarks}
\newtheorem{ex}[thm]{Example}
\newtheorem*{convent}{Conventions}
\DeclareMathOperator*{\holim}{holim}
\DeclareMathOperator*{\hocolim}{hocolim}
\begin{document}\
\title{Fake Wedges}
\date{\today} 
\author{John R. Klein} 
\address{Department of Mathematics, Wayne State University,
Detroit, MI 48202} 
\email{klein@math.wayne.edu} 
\author{John W. Peter} 
\address{Department of Mathematics, Utica College,
Utica, NY 13502}
\email{jwpeter@utica.edu}
\begin{abstract} A {\it fake wedge} is a diagram of spaces $K \leftarrow AÊ\to C$
whose double mapping cylinder is contractible. The terminology stems from the
 special case  $A = K\vee C$ with maps given by the projections.
In this paper, we study the homotopy
type of the moduli space $\mathcal D(K,C)$ of fake wedges on $K$ and $C$. We formulate two
conjectures concerning this moduli space and verify that these conjectures hold
after looping once. We show how embeddings of manifolds in Euclidean
space provide a wealth of examples of non-trivial fake wedges.
In an appendix, we recall discussions that the first author had with Greg Arone
and Bob Thomason in early 1995 and explain how these are related to  our conjectures.
\end{abstract}
\thanks{The first author is partially supported by the 
National Science Foundation}
\maketitle
\setlength{\parindent}{15pt}
\setlength{\parskip}{1pt plus 0pt minus 1pt}
\def\Top{\bold T\bold o \bold p}
\def\wTop{\text{\rm w}\bold T}
\def\wT{\text{\rm w}\bold T}
\def\vo{\varOmega}
\def\vs{\varSigma}
\def\smsh{\wedge}
\def\flush{\flushpar}
\def\dbslash{/\!\! /}
\def\:{\colon\!}
\def\Bbb{\mathbb}
\def\bold{\mathbf}
\def\cal{\mathcal}
\def\orb{\cal O}
\def\hoP{\text{\rm ho}P}

\setcounter{tocdepth}{1}
\tableofcontents
\addcontentsline{file}{sec_unit}{entry}

\section{Introduction \label{intro}}

There is a long history of
 constructing families of spaces which cannot be distinguished
by homology. The purpose of this paper is to study a variant
of this kind of problem for spaces that appear
to be wedges from the viewpoint of every homology theory, but
which are not the homotopy type of a wedge. 
In fact, our context is somewhat interesting in that
the spaces we consider do split as wedges after a single suspension.

More precisely, suppose $K$ and $C$ are based spaces having the
homotopy type of CW complexes.
A {\it fake wedge} on $K$ and $C$ is a based space $A$ equipped with maps $A\to K$ and
$A\to C$ such that the homotopy colimit $\hocolim (K \leftarrow A \to C)$
is contractible.

 The pair of maps in the definition can be amalgamated into a single map $A\to K\times C$ 
which we call a {\it structure map.}
We usually suppress the structure map from the notation and specify a fake wedge by its underlying space.
Two fake wedges $A$ and $A'$ on $K$ and $C$ are said to be {\it equivalent} if there is a
finite chain of weak homotopy equivalences from $A$ to $A'$ each commuting with the structure map to $K\times C$.   

For example, if $A = K \vee C$ and the maps are the projections, then the homotopy colimit is given by the wedge of reduced cones $C(K) \vee C(C)$ and is therefore contractible; we say in this case that $A$ is a {\it trivial} wedge on $K$ and $C$. More generally, we will also call a fake wedge $A$ trivial on $K$ and $C$ if is equivalent to the trivial one. 
If $A$ is a fake wedge then we can suspend
to get maps $\Sigma A\to \Sigma K$ and $\Sigma A\to \Sigma C$ to obtain a fake wedge
$\Sigma A$ on $\Sigma K$ and $\Sigma C$. However, this fake wedge is trivial, since
 we can use the comultiplication on $\Sigma A$ to add the maps and get a trivialization.
\medskip

A reservoir of non-trivial examples is  provided by embedding theory.
 Suppose that $K^n \subset S^n$ is an orientable compact codimension zero submanifold
whose boundary we will denote by $E$. 
Let $C$ be the complement of $K$.  
Then the union $K \cup_E C$ is $S^n$. 
There are two methods of manufacturing fake wedges from this situation.
The first is to remove a point $x$ from the interior of $C$. This gives a space $W$ such that
the union $K\cup_{E} W$ is $\Bbb R^n$. Therefore $E$ is a fake wedge on $K$ and $W$.
It is not necessarily trivial:

\begin{ex} Let $K = S^p \times D^q \subset S^{p+q}$ be standardly embedded.
Then $E = S^p \times S^{q-1}$ and $C = D^{p+1} \times S^{q-1}$. Moreover, $W = C-x$ has the homotopy type of 
$S^p_+\smsh S^{q-1} = S^{p+q-1} \vee S^{q-1}$ (where $S^p_+$ means $S^p$ with a disjoint basepoint). Then $E$ is a non-trivial fake wedge on $K$ and $W$,
since there are no non-trivial cup products $a\cup b$ in $K \vee W$ for $a,b$ in positive degrees.
\end{ex}

The second, and possibly more interesting way to 
manufacture examples of fake wedges from an embedding $K \subset S^n$
is to remove a point $y$ from $E$.
Let $A = E \setminus y$. Then the union $K \cup_A C$ is $\Bbb R^n$, so $A$ is a fake wedge on $K$ and $C$.
In many instances $A$ is non-trivial:

\begin{bigthm} \label{fake_wedge_from_embedding} Suppose $K\subset S^n$ is a compact tubular neighborhood of a closed connected orientable submanifold $M\subset S^n$ of codimension $\ge 3$. Let $E$ be the boundary of $K$ and let $y \in E$ be any point.
Assume $M$ is not a homology sphere. 
Then $A = E\setminus y$ is a non-trivial fake wedge on $K$ and $C$.
\end{bigthm}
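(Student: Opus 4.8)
The plan is to detect non-triviality with a mixed cup product pulled back through the structure map. Fix a field $\mathbb{F}$; all (co)homology is taken with $\mathbb{F}$ coefficients. For any fake wedge $B$ on $K$ and $C$ with structure map $g=(g_K,g_C)\colon B\to K\times C$, consider whether the composite $\tilde H^{>0}(K)\otimes\tilde H^{>0}(C)\xrightarrow{\times}H^*(K\times C)\xrightarrow{g^*}H^*(B)$, sending $\alpha\otimes\beta\mapsto g_K^*\alpha\cup g_C^*\beta$, is identically zero. A chain of weak equivalences over $K\times C$ induces on cohomology a chain of ring isomorphisms compatible with the maps from $H^*(K\times C)$, so this condition depends only on the equivalence class of $B$; and it holds for the trivial wedge, because in $H^*(K\vee C)$ any product of a positive-degree class pulled back from $K$ with one pulled back from $C$ vanishes. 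It therefore suffices to produce $\alpha\in\tilde H^{>0}(K)$ and $\beta\in\tilde H^{>0}(C)$ with $f_K^*\alpha\cup f_C^*\beta\neq 0$ in $H^*(A)$, where $f=(f_K,f_C)$ is the structure map of $A=E\setminus y$.

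Next I would unwind the geometry. Put $m=\dim M$ and $k=n-m\ge 3$, so $\pi\colon E\to M$ is an $S^{k-1}$-bundle with $\dim E=n-1$; we may assume $m\ge 2$ (for $m\le 1$, $M$ is a circle or a point). Up to homotopy $f_K$ is $A\hookrightarrow E\hookrightarrow K$ and $f_C$ is $A\hookrightarrow E\hookrightarrow C$, so on cohomology $f_K^*$ is $H^*(K)\cong H^*(M)\xrightarrow{\pi^*}H^*(E)\xrightarrow{\rho}H^*(A)$ and $f_C^*$ is $H^*(C)\xrightarrow{r}H^*(E)\xrightarrow{\rho}H^*(A)$, where $\rho$ is restriction and $r$ is restriction to the boundary $E=\partial C$. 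Three facts about these maps are needed. First, Mayer--Vietoris for $S^n=K\cup_E C$ gives, for $0<q<n-1$, a direct sum $H^q(E)=\pi^*H^q(M)\oplus r(\tilde H^q(C))$; in particular $\pi^*$ and $r$ are injective there. Second, since $[M]=0$ in $H_m(S^n;\mathbb{Z})$ (because $0<m<n$), the normal Euler class $e(\nu_M)=i^*\mathrm{PD}_{S^n}[M]$ vanishes, so the Gysin sequence of $\pi$ splits into short exact sequences $0\to H^q(M)\xrightarrow{\pi^*}H^q(E)\xrightarrow{\int_F}H^{q-k+1}(M)\to 0$, with $\int_F$ the $H^*(M)$-linear fibre integration. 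Third, since $A\simeq E\setminus\mathring D^{n-1}$ is a compact manifold with non-empty boundary, $H^q(A)=0$ for $q\ge n-1$, and the cofibre sequence $A\to E\to S^{n-1}$ shows $\rho$ is an isomorphism for $q\le n-2$.

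Now for the choice of classes. Because $M$ is a closed connected orientable manifold that is not a homology sphere, for a suitable $\mathbb{F}\in\{\mathbb{Q}\}\cup\{\mathbb{F}_p\}$ there is a nonzero $\alpha\in\tilde H^i(M)$ with $1\le i\le m-1$. By Alexander duality $\tilde H_{k-1}(C)\cong\tilde H^{m}(M)=\mathbb{F}$, hence $\tilde H^{k-1}(C)\cong\mathbb{F}$, and I would pick a nonzero $\beta\in\tilde H^{k-1}(C)$ — geometrically, the meridian (linking-sphere) class. Since $r$ is injective and $r(\tilde H^{k-1}(C))$ is a complement to $\ker(\int_F)=\pi^*H^{k-1}(M)$ in $H^{k-1}(E)$, the scalar $\int_F(r\beta)\in H^0(M)$ is nonzero, so
\[
\int_F\!\big(\pi^*\alpha\cup r\beta\big)=\alpha\cup\int_F(r\beta)\neq 0 \quad\text{in }H^i(M),
\]
and therefore $\pi^*\alpha\cup r\beta\neq 0$ in $H^{i+k-1}(E)$. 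As $i+k-1\le(m-1)+(k-1)=n-2$, the restriction $\rho$ is injective in this degree, so $f_K^*\alpha\cup f_C^*\beta=\rho(\pi^*\alpha\cup r\beta)\neq 0$ in $H^{i+k-1}(A)$. By the first paragraph, $A$ is not equivalent to $K\vee C$: it is a non-trivial fake wedge.

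The main obstacle is the choice of $\beta$. One cannot in general realize the required non-zero product inside $H^*(M)$: if, say, $M=S^a\times S^b$, every product of positive-degree classes of $M$ lands in the top degree $m$, which is precisely where $H^*(A)$ vanishes. The resolution is to pair an arbitrary positive class of $M$ against the meridian class of the complement $C$, which restricts on the boundary sphere bundle $E$ to a class integrating to $1$ along the fibre, so the product is seen by fibre integration — and, crucially, it lands in degree $\le n-2$, hence survives the passage from $E$ to $A=E\setminus y$, which destroys only the top cohomology of $E$. The remaining technical points are the vanishing of the normal Euler class (used to split the Gysin sequence) and the Mayer--Vietoris splitting of $H^*(E)$.
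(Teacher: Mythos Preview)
Your argument is correct and is in fact more direct than the paper's. Both proofs begin with the same reduction: it suffices to show that the composite $A\to K\times C\to K\smsh C$ is nontrivial on reduced cohomology, equivalently that some mixed product $f_K^*\alpha\cup f_C^*\beta$ is nonzero. From there the paper proceeds in two stages. In Case~1 it assumes that the sphere bundle $E\to M$ admits a section whose composite into $C$ is null-homotopic; using this section it identifies $E/M$ with a Thom space and runs a Thom-class argument to produce the required nonzero product. For the general case (Case~2) no such section is assumed to exist, so the paper stabilizes: it re-embeds $M\subset S^n\subset S^{n+1}$, where a section with the required property is available for free, applies Case~1 there, and then argues by contradiction that triviality of $A$ would force triviality of the stabilized fake wedge.

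You bypass this two-step reduction entirely. By observing that the normal Euler class vanishes (since $[M]=0$ in $H_m(S^n)$), you get a split Gysin sequence for the sphere bundle $E\to M$ without any section, and the $H^*(M)$-linearity of fibre integration plays the role that the Thom isomorphism plays in the paper. Pairing an arbitrary class $\alpha\in\tilde H^i(M)$, $1\le i\le m-1$, against the meridian class $\beta\in\tilde H^{k-1}(C)$ (whose restriction to $E$ has nonzero fibre integral, by the Mayer--Vietoris complementarity you establish) gives a nonzero product in degree at most $n-2$, which survives to $A$. The advantage of your route is that it is a single cohomological computation with no case analysis and no stabilization trick; the paper's approach, on the other hand, makes the connection to Thom spaces and to the fiberwise-suspension operation on fake wedges more visible, which is thematically tied to the rest of the paper.
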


\noindent (our requirement that $M$ is not a homology sphere is necessary, since the standard
embedding $S^m \subset S^n$ gives rise to a trivial wedge on $S^m$ and $S^{n-m-1}$).

\begin{rem} Although we will not pursue the matter here, it is worth mentioning that
the second construction suggests a new approach to finding codimension zero embeddings
of $K$ in $S^n$, where $K$ is a compact smooth $n$-manifold with boundary:
\begin{itemize}
\item Step 1: find the possible complements $C$ up to homotopy;
\item Step 2: find all possible fake wedges $A$ on $K$ and $C$;
\item Step 3: determine whether one can attach a cell to $A$ to build a space $E$
which maps to $K\times C$ extending the  structure map from $A$ such that the 
pairs $(K,E)$ and $(C,E)$ satisfy $n$-dimensional Poincare duality.
\item Step 4: use surgery theory to smoothen the Poincar\'e duality data in Step 3.
\end{itemize}
Step 1 concerns the existence of  Spanier-Whitehead
$n$-duals of $K$ and is a kind of desuspension question (cf.\ \cite{Klein_susp}). 
Step 2 is related to the main results of this paper. 
Step\, Ê3 is believed to be difficult and the known results are limited in scope \cite{Klein_embeddability},
\cite{Klein-Richter1}. Step 4 is achieved by application of 
the Browder-Casson-Sullivan-Wall theorem
\cite[Chap.\ 12]{Wall_book}.
\end{rem}

\subsection*{The classifying space of fake wedges} 
Define a category $D(K,C)$ as follows:
an {\it object} $A$ is a fake wedge on $K$ and $C$.
A {\it morphism} $A\to A'$ is a weak equivalence of underlying spaces which commutes with the structure
maps.
We let $\cal D(K,C)$ denote the classifying space of $D(K,C)$, i.e., the
realization of its nerve. 
It has a preferred basepoint given by the trivial wedge $K \vee C$. Then $\cal D(K,C)$ is a classifying space for fake wedges: if $B$ is a finite complex, then  up to homotopy, a map 
$B\to \cal D(K,C)$ is the same thing as specifying a fibration $E\to B$ 
together with a map $E\to K\times C$ such that for $b\in B$
the fiber $E_b \subset E$ has the structure 
of a fake wedge on $K$ and $C$.  Note that 
$\pi_0(\cal D(K,C))$ is the set of equivalence classes of fake wedges on $K$ and $C$.

To formulate our main conjecture, suppose $X$ is a based space.
We let $X^{[j]}$ be the $j$-fold smash product of $X$ and we
let $jX$ denote the wedge of $j$ copies of $X$.
For $n \ge 1$, let $p_n(x,y)$ be the polynomial given by 
\[
(x+ y)^n - x^n - y^n = \sum_{i = 1}^{n-1} \binom{n}{i} x^i y^{n-i}\, .
\]
By analogy, let $p_n(K,C)$ be the space
\[
\bigvee_{i = 1}^{n-1} \binom{n}{i} K^{[i]} \smsh C^{[n-i]}\, .
\]
Then $p_n(K,C)$ is a retract of $(K\vee C)^{[n]}$, and the 
action of the symmetric group $\Sigma_n$ 
leaves the subspace $p_n(K,C)$ invariant. Hence $p_n(K,C)$ comes
equipped with an action of $\Sigma_n$.

We let $\cal W_n$ denote the $n$-th coefficient spectrum of the 
identity functor from based spaces to based spaces in the sense of the calculus  
of homotopy functors \cite[\S8]{Goodwillie3}. In particular, $\cal W_n$ is a spectrum with $\Sigma_n$-action
whose underlying homotopy type is a wedge of $(n-1)!$ copies of the $(1-n)$-sphere spectrum.
We will be considering $\cal W_n \smsh_{h\Sigma_n} p_n(K,C)$,
the homotopy orbit spectrum 
of  $\Sigma_n$ acting diagonally on the smash product $\cal W_n \smsh p_n(K,C)$.
If $E$ is a spectrum and $X$ is a based space, we let 
\[
F^{\text{\rm st}}(X,E) := \Omega^\infty F(X,E)
\]
be the zeroth space of the function spectrum $F(X,E)$. This is the spectrum
whose $j$-th space is the function space of based maps $X\to E_j$. As usual, if necessary
we will replace $E$ by an $\Omega$-spectrum  to insure this delivers a well-defined
homotopy type. When $E = \Sigma^\infty Y$ is a suspension spectrum, we abuse
notation and write $F^{\text{\rm st}}(X,Y)$ in place 
of $F^{\text{\rm st}}(X,\Sigma^\infty Y)$; this is the function space of 
{\it stable maps} from $X$ to $Y$.

\begin{hypo} \label{hypo:rho} Assume that $K$ is $r$-connected and $C$ is $s$-connected with $r,s\ge 1$.
Assume $K$ is homotopy equivalent to a CW complex of dimension $\le k$ and
that $C$ is homotopy equivalent to a CW complex of dimension $\le c$.
Set 
\[
\rho_n =  \min(r + ns,nr+s) + 2  - \max(k,c) \, .
\]
\end{hypo}

\begin{bigconj} \label{bigthm:conjecture1} There is a tower of based spaces
$$
\cdots \to \cal D_n(K,C) \to \cdots \to \cal D_1(K,C) 
$$
and compatible maps $\cal D(K,C) \to \cal D_n(K,C)$ such that
\begin{itemize}
\item $\cal D_1(K,C)$ is a point; 
\item the map $\cal D(K,C) \to \cal D_n(K,C)$ is $\rho_n$-connected. In particular,
the map \[\cal D(K,C) \to \holim_{n} \cal D_n(K,C)\] is a weak equivalence;
\item Each map of the tower sits in a homotopy fiber sequence
\[
\cal D_n(K,C) \to \cal D_{n-1}(K,C) \to 
F^{\text{\rm st}}(K \vee C,\Sigma^2 \cal W_n \smsh_{h\Sigma_n} p_n(K,C))\, .
\]
\end{itemize}
\end{bigconj}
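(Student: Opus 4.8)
The plan is to realize $\mathcal{D}(K,C)$ as a moduli space governed, layer by layer, by the Goodwillie tower of the identity functor evaluated at $K\vee C$. The first move is to linearize the fake wedge condition. If $A$ is a fake wedge with structure map $\phi\colon A\to K\times C$, write $f\colon A\to K$ and $g\colon A\to C$ for its components and form
\[
\mu\colon \Sigma A \xrightarrow{\ \mathrm{comult}\ } \Sigma A\vee\Sigma A \xrightarrow{\ \Sigma f\vee\Sigma g\ } \Sigma K\vee\Sigma C .
\]
Since suspension preserves homotopy pushouts, the homotopy cofiber of $\mu$ is $\Sigma(K\cup_A C)$, so $A$ is a fake wedge exactly when $\mu$ is a weak equivalence. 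Consequently $\mathcal{D}(K,C)$ is, up to equivalence, the moduli space of based spaces $A$ equipped with maps to $K$ and $C$ for which $\mu$ is an equivalence, pointed at the trivial wedge $K\vee C\hookrightarrow K\times C$; equivalently, it classifies structure maps $A\to K\times C$ refining $K\vee C\hookrightarrow K\times C$ ``to all orders.'' The tower $\{\mathcal{D}_n(K,C)\}$ should filter this by how far the refinement is carried out.

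Concretely, I would take $\mathcal{D}_n(K,C)$ to be the moduli space of ``$n$-th order fake wedges'' --- the data realizing the trivial wedge only through the $n$-th Taylor approximation $P_n(\mathrm{id})$ of the identity evaluated at $K\vee C$. Since $P_1(\mathrm{id})$ is stabilization and a space $A$ with $\Sigma^\infty A\simeq\Sigma^\infty(K\vee C)$ admits a contractible space of compatible stable structure maps, $\mathcal{D}_1(K,C)=\ast$. For the layers one exploits the $\Sigma_n$-equivariant splitting $(K\vee C)^{[n]}\simeq K^{[n]}\vee C^{[n]}\vee p_n(K,C)$: the ``pure'' summands $K^{[n]}$ and $C^{[n]}$ are accounted for by the genuine presence of $K$ and $C$ in the fake wedge (via $A\to K$, $A\to C$), so only the cross term $p_n(K,C)$ controls the step from $\mathcal{D}_{n-1}$ to $\mathcal{D}_n$. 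Running the obstruction theory of the tower $\{P_n(\mathrm{id})\}$ on this moduli problem then produces, for the $n$-th layer, the zeroth space of the function spectrum from $K\vee C$ into $\mathcal{W}_n\smsh_{h\Sigma_n}p_n(K,C)$, up to a dimension shift. That shift is $\Sigma^2$: one suspension is the $\Sigma(K\cup_A C)$ appearing in the linearization above, and a second reflects that a fake-wedge structure on $A$ is a two-sided gluing datum, so that $\mathcal{D}_n(K,C)$ is effectively a double delooping of the governing function spectrum. This yields the homotopy fiber sequences
\[
\mathcal{D}_n(K,C)\to\mathcal{D}_{n-1}(K,C)\to F^{\text{\rm st}}\bigl(K\vee C,\ \Sigma^2\mathcal{W}_n\smsh_{h\Sigma_n}p_n(K,C)\bigr)
\]
of the statement.

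The connectivity assertion is then bookkeeping with Goodwillie's estimates and Hypothesis~\ref{hypo:rho}: the least connected summand of $p_n(K,C)$ is $K^{[1]}\smsh C^{[n-1]}$ or $K^{[n-1]}\smsh C^{[1]}$, of connectivity $\min(r+(n-1)s,(n-1)r+s)+n-1$; smashing with $\mathcal{W}_n$ drops this by $n-1$, passing to homotopy orbits does not drop it, $\Sigma^2$ raises it by $2$, and mapping out of the $\le\max(k,c)$-dimensional complex $K\vee C$ drops it by $\max(k,c)$, giving $\rho_{n-1}$ for the $(n-1)$-st layer and hence $\rho_n$-connectivity of $\mathcal{D}(K,C)\to\mathcal{D}_n(K,C)$; since $r,s\ge 1$ the $\rho_n$ strictly increase, so $\mathcal{D}(K,C)\to\holim_n\mathcal{D}_n(K,C)$ is a weak equivalence. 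The hard part will be making the first step honest --- identifying the \emph{unstable} moduli space $\mathcal{D}(K,C)$ with the Goodwillie-tower moduli space, pinning down the $\Sigma^2$, and controlling $\pi_0$ (which fake wedges are equivalent). Comparing ``spaces of spaces'' with ``spaces of maps into spectra'' requires a delicate relative Blakers--Massey or cotangent-complex argument at each stage, and the behavior on components is precisely where the unstable subtleties live. This is presumably why the fully unstable statement remains a conjecture while its once-looped form $\Omega\mathcal{D}(K,C)$ is within reach: after looping, the $\pi_0$ issues vanish, the moduli-of-spaces problem linearizes into one about looped mapping spaces on which the Goodwillie layers act directly, and the tower and its fiber sequences can be built and verified stage by stage.
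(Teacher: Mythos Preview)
This statement is a \emph{conjecture} in the paper and is not proved there; only its once-looped form (Theorem~C) is established. Your proposal is accordingly not a proof but a strategic sketch, and you correctly flag at the end that the unstable statement remains open and that the obstruction is a delooping problem. So there is no ``paper's proof'' to compare against---only the paper's proof of the looped version and its appendix discussion of what a full proof would require.

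Your outline is broadly aligned with the paper's thinking but organized differently. For the looped theorem the paper does not build $\mathcal{D}_n(K,C)$ directly as a moduli space of ``$n$-th order fake wedges.'' Instead it routes through the moduli space $\mathfrak{M}_E$ of suspension structures on $E=\Sigma^\infty K\times\Sigma^\infty C$, via a fiber sequence $\Omega\mathcal{D}(K,C)\to\Omega\mathfrak{M}_E\to\epsilon(K,C)$ (Lemma~\ref{sequence}). The looped tower $\{\delta_j(K,C)\}$ is then defined concretely as the space of lifts of $K\vee C\to P_j(K)\times P_j(C)$ through $P_j(K\vee C)$, and the layers are identified using exactly the equivariant splitting $(K\vee C)^{[j]}\simeq K^{[j]}\vee C^{[j]}\vee p_j(K,C)$ you invoke. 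The appendix then explains that the full conjecture should follow from a (still hypothetical) theory of $E_\infty$-coalgebra structures on spectra, which would furnish deloopings $\mathfrak{M}_{E,n}$ of the lifting spaces $T_n(y)$; plugging those into the same comparison would yield the unlooped $\mathcal{D}_n(K,C)$. Your ``linearize via $\mu\colon\Sigma A\to\Sigma K\vee\Sigma C$'' step is not how the paper proceeds, though it is a reasonable heuristic.

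Two specific points in your sketch are looser than the rest. First, your justification for $\mathcal{D}_1(K,C)=\ast$---that ``a space $A$ with $\Sigma^\infty A\simeq\Sigma^\infty(K\vee C)$ admits a contractible space of compatible stable structure maps''---is not right as stated: that would force $\mathfrak{M}_E$ to be contractible, which it generally is not. The actual mechanism, visible in the looped proof, is that $P_1(K\vee C)\to P_1(K)\times P_1(C)$ is an equivalence (since $Q$ takes wedges to products), so the relevant lifting space is contractible. Second, your account of the $\Sigma^2$ shift is heuristic. In the paper's framework the single $\Sigma$ in the looped fiber sequences comes directly from Goodwillie's principal-fibration classifying map $P_{j-1}\to\Omega^\infty(\Sigma\mathcal{W}_j\smsh_{h\Sigma_j}(-)^{[j]})$; the missing second suspension is exactly what the conjectural delooping of $T_n(y)$ via $E_\infty$-coalgebra theory is supposed to supply, not a consequence of any ``two-sided gluing'' argument.
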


As evidence for the above conjecture we have

\begin{bigthm} \label{bigthm:looped-conjecture1}
Conjecture \ref{bigthm:conjecture1} holds after looping once, i.e., there
is a tower of based spaces
$$
\cdots \to \delta_n(K,C) \to \cdots \to \delta_1(K,C) 
$$
and compatible maps $\Omega\cal D(K,C) \to \delta_n(K,C)$ such that
\begin{itemize}
\item $\delta_1(K,C)$ is a point; 
\item the map $\Omega \cal D(K,C) \to \delta_n(K,C)$ is $(\rho_n-1)$-connected. In particular,
the map $\Omega \cal D(K,C) \to \holim_{n} \delta_n(K,C)$ is a weak equivalence;
\item Each map of the tower sits in a homotopy fiber sequence
\[
\delta_n(K,C) \to \delta_{n-1}(K,C) \to 
F^{\text{\rm st}}(K \vee C,\Sigma \cal W_n \smsh_{h\Sigma_n} p_n(K,C))\, .
\]
\end{itemize}
\end{bigthm}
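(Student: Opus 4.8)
The plan is to bypass the conjectural tower $\{\cal D_n(K,C)\}$ altogether and to analyse $\Omega\cal D(K,C)$ directly, where it becomes a concrete mapping space. Because $\cal D(K,C)$ is the classifying space of a category every morphism of which is a weak equivalence, a standard argument (after passing to cofibrant--fibrant models) identifies
\[
\Omega\cal D(K,C)\ \simeq\ \operatorname{haut}_{/K\times C}(K\vee C),
\]
the derived space of homotopy automorphisms of the trivial fake wedge relative to its structure map --- equivalently the automorphisms of $K\vee C$ as an object of the category of spaces over $K\times C$. First I would make this dictionary explicit and geometric: replacing $K\vee C\to K\times C$ by a fibration with fibre $F:=\Sigma(\Omega K\smsh\Omega C)$ (the classical theorem of Ganea) and pulling back along $K\vee C\hookrightarrow K\times C$ yields a fibration $\cal E\to K\vee C$ with fibre $F$ whose restrictions over $K$ and over $C$ carry canonical sections coming from $K\subset K\vee C$ and $C\subset K\vee C$. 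Then $\Omega\cal D(K,C)$ is a union of path components of the section space $\Gamma(K\vee C;\cal E)$, based at the identity section, so a deformation of the trivial fake wedge is literally a deformation of this section and the homotopy of $\Omega\cal D(K,C)$ is accessible by obstruction theory over the finite-dimensional base $K\vee C$.

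Next I would build the tower. Regarded as a bivariant homotopy functor of the pair $(K,C)$, the functor $\operatorname{haut}_{/K\times C}(K\vee C)$ vanishes whenever either variable is a point, and I would filter it through the Goodwillie--Taylor tower of the identity functor --- this is the source of the coefficient spectra $\cal W_n$, and is the same structure that governs $\Omega(K\vee C)$ relative to $\Omega K\times\Omega C$ through the spectral Lie operad. In total degree $n$ the contribution splits according to how many of the $n$ slots carry a $K$ rather than a $C$, producing the wedge summands $K^{[i]}\smsh C^{[n-i]}$ with multiplicity $\binom{n}{i}$, that is, exactly $p_n(K,C)$ with its inherited $\Sigma_n$-action, while the permutations of the slots produce the $\Sigma_n$-action on $\cal W_n$ and the passage to homotopy orbits. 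Taking $\delta_n(K,C)$ to be the resulting $n$-th stage, one obtains $\delta_1(K,C)\simeq *$ (the degree-one part of a functor vanishing on a point in either variable is trivial, and indeed $p_1(K,C)=*$) together with homotopy fibre sequences
\[
\delta_n(K,C)\ \to\ \delta_{n-1}(K,C)\ \to\ F^{\text{\rm st}}\!\bigl(K\vee C,\ \Sigma\,\cal W_n\smsh_{h\Sigma_n}p_n(K,C)\bigr).
\]
The single suspension $\Sigma$, in place of the $\Sigma^2$ of Conjecture~\ref{bigthm:conjecture1}, is precisely the discrepancy between $\operatorname{haut}$ and its once-delooped version; equivalently it is the extra suspension carried by the fibre $F=\Sigma(\Omega K\smsh\Omega C)$.

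It then remains to establish that $\Omega\cal D(K,C)\to\delta_n(K,C)$ is $(\rho_n-1)$-connected, which is a convergence estimate. By Blakers--Massey and Hypothesis~\ref{hypo:rho} --- $K$ is $r$-connected of dimension $\le k$, $C$ is $s$-connected of dimension $\le c$ --- the homotopy fibre of the $n$-th tower map, namely $F^{\text{\rm st}}(K\vee C,\cal W_n\smsh_{h\Sigma_n}p_n(K,C))$, has connectivity $\min(r+(n-1)s,(n-1)r+s)-\max(k,c)=\rho_{n-1}-2$. Since $\rho_n=\rho_{n-1}+\min(r,s)$ and $\min(r,s)\ge1$, the layers have strictly increasing connectivity, so the tower converges, $\Omega\cal D(K,C)\simeq\holim_n\delta_n(K,C)$, and the stated estimate for the map to $\delta_n(K,C)$ follows since its homotopy fibre is controlled by the layers of index $>n$. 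Finite-dimensionality of $K$ and $C$ is exactly what makes the function spectra $F^{\text{\rm st}}(K\vee C,-)$, and hence the tower, converge.

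The main obstacle is the construction in the second paragraph: identifying the layers of the natural tower on the section space $\Gamma(K\vee C;\cal E)$ with $\Sigma\,\cal W_n\smsh_{h\Sigma_n}p_n(K,C)$, with the correct twisted $\Sigma_n$-action (the one inherited from $(K\vee C)^{[n]}$) and compatibly as a tower rather than merely levelwise. Equivalently, one must show that perturbing the trivial fake wedge is unobstructed order by order and that the $n$-th obstruction space is precisely $F^{\text{\rm st}}(K\vee C,\Sigma\,\cal W_n\smsh_{h\Sigma_n}p_n(K,C))$; that the coefficients are the derivatives of the identity, and nothing ad hoc, is the substantive content, and it is here that the calculus of homotopy functors genuinely enters. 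This is also where one sees why the unlooped Conjecture~\ref{bigthm:conjecture1} is harder: the same construction before looping would demand control over $\pi_0$ and $\pi_1$ of the tower --- precisely the data, such as $\pi_0\cal D(K,C)$, that the conjecture is meant to compute --- whereas the analyticity at hand only suffices after a single loop.
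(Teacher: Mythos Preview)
Your outline is correct in its essentials and rests on the same two ingredients the paper uses: the identification of $\Omega\cal D(K,C)$ with the space of self-maps of $K\vee C$ over $K\times C$, and the filtration of this by the Goodwillie tower of the identity, with the layer identification coming from the $\Sigma_n$-equivariant splitting $(K\vee C)^{[n]}\simeq K^{[n]}\vee C^{[n]}\vee p_n(K,C)$. Your connectivity bookkeeping is also right.

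Where you differ from the paper is in packaging, and the paper's route is more direct. Rather than passing through the Ganea fibre and a section space $\Gamma(K\vee C;\cal E)$, the paper simply \emph{defines} $\delta_j(K,C)$ as the space of lifts
\[
\xymatrix{ & P_j(K\vee C)\ar[d] \\ K\vee C \ar[r]\ar@{-->}[ur] & P_j(K)\times P_j(C), }
\]
so the tower is built in by hand from the Goodwillie tower $P_j$ of the identity. The layer identification, which you flag as the main obstacle, is then dispatched by a single $3\times 3$ diagram: one compares $\delta_j$ with the tower $T_j(y)$ from the moduli space of suspension spectra (lifts of $K\vee C\to Q(K\vee C)$ through $P_j(K\vee C)$) and with an auxiliary tower $\epsilon_j(K,C)$ (lifts of $K\vee C\to Q(K)\times Q(C)$ through $P_j(K)\times P_j(C)$). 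The rows and columns are fibre sequences, and the bottom row is induced by the projection $(K\vee C)^{[j]}\to K^{[j]}\vee C^{[j]}$, whose homotopy fibre is $p_j(K,C)$; this immediately gives the desired fibre sequence with base $F^{\text{\rm st}}(K\vee C,\Sigma\cal W_j\smsh_{h\Sigma_j}p_j(K,C))$. In your framing the same identification would have to be extracted from the fibrewise structure of $\cal E$ over $K\vee C$, which is doable but less transparent---essentially you would be reproving the comparison that the $3\times 3$ diagram gives for free. The Ganea description of the fibre is not actually used anywhere in the paper's argument.
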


\begin{rem} By \cite[th.~A(2)]{Klein_pushout} there is a function
\[
\pi_0(\cal D(K,C)) \to \{K \vee C,K\smsh C\}\, ,
\]
where the target is the given by homotopy classes
of stable maps $K \vee C \to K\smsh C$.
It was shown that this function  is surjective if $k,c\le 3\min(r,s)$
and a bijection if $k,c \le 3\min(r,s)-1$. Based on what we
do here, it seems plausible to us that these inequalities can be
improved to $\rho_2 \ge 0$ and $\rho_2 \ge 1$ respectively.

The second coefficient spectrum $\cal W_2$ of 
the identity functor is the $(-1)$-sphere with
trivial $\Sigma_2$-action. Furthermore, $p_2(K,C) = 2K\smsh C$. Hence
$\Sigma\cal W_2 \smsh_{h\Sigma_2} p_2(K,C) \simeq K\smsh C$, so
\cite[th.~A(2)]{Klein_pushout} is really a partial verification of Conjecture 
\ref{bigthm:conjecture1} on the level of path components.
\end{rem}

\begin{rem}\label{rem:function-space-splitting} The stable function space that
appears in Theorem \ref{bigthm:looped-conjecture1}
can be simplified. Greg Arone has explained to us that
for $i,n-i \ge 1$, the spectrum $\cal W_n$ splits
 $(\Sigma_i \times \Sigma_{n-i})$-equivariantly
as a wedge of spectra of the form
\[
(\Sigma_i \times \Sigma_{n-i})_+ \smsh_{\Sigma_d}
 \Sigma^{d-n}\cal W_d\, ,
\]
where $d$ ranges over the common divisors of $i$ and $n-i$. The  number of times that the displayed summand occurs is a certain number $B(i/d, (n-i)/d)$. This is the number of basic products in the free Lie algebra on two generators $x_1, x_2$, involving $i/d$ copies of $x_1$ and $(n-i)/d$ copies of $x_2$ (for details, see \cite[th.~0.1]{AK}).

Equivariantly, the summand of $p_n(K,C)$ given by $\binom{n}{i}K^{[i]}\smsh C^{[n-i]}$
may also be rewritten as 
\[
(\Sigma_n)_+ \smsh_{\Sigma_i \times \Sigma_{n-i}}(K^{[i]}\smsh C^{[n-i]})\, .
\]
These two observations may be combined to show that the spectrum 
$\Sigma\cal W_n \smsh_{h\Sigma_n} p_n(K,C)$ splits into a wedge
of spectra of the form
\[ 
\Sigma^{d-n+1}\cal W_d \smsh_{h\Sigma_d} (K^{[i]}\smsh C^{[n-i]})\, .
\]
It follows that the stable function space $F^{\text{\rm st}}(K \vee C,\Sigma \cal W_n \smsh_{h\Sigma_n} p_n(K,C))$ splits into a product of terms of the form
\begin{equation}\label{eqn:simplified}
F^{\text{\rm st}}(K \vee C,\Sigma^{d-n+1} \cal W_d \smsh_{h\Sigma_d} (K^{[i]} \smsh C^{[n-i]})).
\end{equation}
In special cases we hope to
be able to use this decomposition to obtain numerical information. For example, when $K$ and $C$ are spheres
the rational homotopy type of the stable function
space \eqref{eqn:simplified} is contractible if $d >2$ (and is also contractible for $d=2$ in some cases; cf.\ \cite{Arone-Mahowald}).\footnote{We are indebted to the referee for pointing this out.} More generally, for any $K$ and $C$, the 
rational type of the stable function space
 \eqref{eqn:simplified} may be expressed in terms of the submodule ${\cal Lie}(d)$
of the free Lie algebra ${\cal Lie}(x_1,\dots,x_d)$ over the rational
numbers that is given by all bracket monomials containing $x_i$ exactly once (cf.\ \cite{AK}).
We plan to pursue this matter in another paper.
\end{rem}


\subsection*{Stabilization} Define a functor $D(K,C) \to D(\Sigma K,C)$ by
mapping an object $A$ to $\Sigma_C A$ its fiberwise suspension over $C$. 
$$
\Sigma_C A \,\, := \hocolim (C \leftarrow A \to C)
$$
where the homotopy colimit is taken in the category of based spaces and is given by
the reduced double mapping cylinder $C\times 0 \cup  A \smsh (I_+) \cup C\times 1$. 
There are evident maps from the latter to $\Sigma K = *\times 0 \cup  K \smsh (I_+) \cup *\times 1$
and to $C$, so the assignment $A\mapsto \Sigma_C A$ is  a functor. 

Similarly, one has a functor $D(K,C) \to D(K,\Sigma C)$ which is defined by $A\mapsto \Sigma_K A$. Since homotopy colimits commute, we get a diagram of spaces
$$
\xymatrix{
D(K,C) \ar[r] \ar[d] & D(\Sigma K,C) \ar[d] \\
D(K,\Sigma C) \ar[r] &  D(\Sigma K,\Sigma C)
}
$$
which commutes up to canonical isomorphism.

\begin{bigconj} \label{bigthm:conjecture2} The diagram of classifying spaces
$$
\xymatrix{
\cal D(K,C) \ar[r] \ar[d] & \cal D(\Sigma K,C) \ar[d] \\
\cal D(K,\Sigma C) \ar[r] &  \cal D(\Sigma K,\Sigma C)
}
$$
is $\rho_2$-cartesian, where $\rho_2 = \min(r+2s,2r+s) + 2 - \max(k,c)$.
\end{bigconj}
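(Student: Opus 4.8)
The plan is to reduce the statement to the behavior of the tower of Conjecture~\ref{bigthm:conjecture1} under the four stabilization functors, and then to a connectivity estimate for the total homotopy fibers of its layers. As the unlooped form is at present only conjectural, I would carry this out after looping, using Theorem~\ref{bigthm:looped-conjecture1}: if the square of loop spaces obtained by applying $\Omega$ to the displayed square is $(\rho_2-1)$-cartesian, then the displayed square itself is $\rho_2$-cartesian, since the total homotopy fiber of the loop square is $\Omega$ of the total homotopy fiber of the original one (this costs a routine low-dimensional argument, cf.\ \cite[th.~A(2)]{Klein_pushout} on the level of path components). Granting Conjecture~\ref{bigthm:conjecture1}, one runs the same argument without looping and obtains the statement as phrased.

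One then applies Theorem~\ref{bigthm:looped-conjecture1} to each of the four corners. A preliminary point is that the tower $n\mapsto\delta_n(K,C)$ together with the maps $\Omega\cal D(K,C)\to\delta_n(K,C)$ is natural for fiberwise suspension; this should fall out of unwinding the construction, since fiberwise suspension is a homotopy colimit and both the coefficient spectra $\cal W_n$ and the invariant subspaces $p_n(-,-)\subseteq(-\vee-)^{[n]}$ are built from homotopy (co)limits. Granting this, one has a square of towers, and since finite homotopy limits commute with one another, the total homotopy fiber of the looped square is the homotopy limit over $n$ of the total homotopy fibers of the four-fold squares formed by the $n$-th stages $\delta_n(-,-)$. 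Plugging in the fibration sequences of Theorem~\ref{bigthm:looped-conjecture1} and inducting on $n$ (the $n=1$ square is a square of points), this total fiber is an iterated homotopy fiber built from the total homotopy fibers of the \emph{layer squares}: the squares obtained from the one in Conjecture~\ref{bigthm:conjecture2} by replacing $\cal D(-,-)$ with $F^{\text{\rm st}}(-\vee-,\Phi_n(-,-))$, where $\Phi_n(K,C):=\Sigma\cal W_n\smsh_{h\Sigma_n}p_n(K,C)$. Since $\rho_n$ increases with $n$ and the connectivity of $F^{\text{\rm st}}(X,E)$ improves as $\dim X$ drops and $E$ becomes more connected, it is enough to prove that each layer square is $(\rho_2-1)$-cartesian: the tower then converges ($\varprojlim^1$ vanishes) and the homotopy limit inherits the bound.

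The crux is estimating the connectivity of the total fiber of the $n$-th layer square. For this I would use the Arone-type equivariant splitting of Remark~\ref{rem:function-space-splitting}, together with $F^{\text{\rm st}}(X\vee Y,E)\simeq F^{\text{\rm st}}(X,E)\times F^{\text{\rm st}}(Y,E)$, to reduce to the ``atomic'' squares built from the functors $(K,C)\mapsto F^{\text{\rm st}}(K,\Sigma^{d-n+1}\cal W_d\smsh_{h\Sigma_d}(K^{[i]}\smsh C^{[n-i]}))$ and their analogues with $C$ in the source. One must then identify on the nose the maps induced by the stabilization functors: in such a term the source $K$ (resp.\ $C$) is replaced by $\Sigma K$ (resp.\ $\Sigma C$), while equivariantly $K^{[i]}\smsh C^{[n-i]}$ is replaced by $(\Sigma K)^{[i]}\smsh C^{[n-i]}=\Sigma^iK^{[i]}\smsh C^{[n-i]}$, and so on. The connectivity of the resulting total fiber is then controlled by three inputs: Goodwillie's higher Blakers--Massey theorem, bounding how cartesian a square assembled from the $\binom{n}{i}$ pieces can be; the connectivity $1-d$ of $\cal W_d$ together with the $(d,n,i)$-dependent suspension shifts; and the bounds $r,s,k,c$ of Hypothesis~\ref{hypo:rho} entering through $F^{\text{\rm st}}(K,-)$ and $F^{\text{\rm st}}(C,-)$. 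The bookkeeping should show that the worst case over all $i$ and all $n\ge 2$ is attained at $n=2$ --- where, by the Remark, $\Phi_2(K,C)\simeq K\smsh C$ and the relevant square is the linearized one studied (in a weaker range) in \cite{Klein_pushout} --- and that its total fiber is $(\rho_2-2)$-connected, which is exactly what is needed.

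The main obstacle is this last step. Its formal ingredients --- the higher Blakers--Massey theorem, the splitting of Remark~\ref{rem:function-space-splitting}, and the convergence of the tower --- are standard; the real work is to identify the stabilization maps on the low layers, and in particular on the linear layer $F^{\text{\rm st}}(K\vee C,K\smsh C)$, precisely enough that applying Blakers--Massey yields exactly $(\rho_2-2)$-connectivity at $n=2$ (neither less, which would break the argument and force a re-examination of the conjectured bound, nor --- if one also wants sharpness --- more). Concretely this amounts to understanding fiberwise suspension of a fake wedge at the level of its first stable obstruction, the same phenomenon behind the conjectural improvement of \cite[th.~A(2)]{Klein_pushout} mentioned in the Remark, and I expect it, rather than the Blakers--Massey input or the convergence of the tower, to be where the difficulty lies.
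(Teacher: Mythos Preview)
First, a framing issue: the statement you are attempting is Conjecture~\ref{bigthm:conjecture2}, and the paper does \emph{not} prove it. What the paper proves is Theorem~\ref{bigthm:looped-conjecture2}, the looped version. Your opening move---deduce the unlooped statement from the looped one because ``the total homotopy fiber of the loop square is $\Omega$ of the total homotopy fiber of the original one'' plus ``a routine low-dimensional argument''---is precisely the gap the paper leaves open. Knowing that $\Omega F$ is $(\rho_2-2)$-connected tells you nothing about $\pi_0(F)$, and the reference you cite, \cite[th.~A(2)]{Klein_pushout}, gives the bijection on $\pi_0$ only under the stronger hypothesis $k,c\le 3\min(r,s)-1$; the paper explicitly flags the improvement to the $\rho_2$ range as ``plausible'' but unproved. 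So path~(a) of your argument does not close, and path~(b) (grant Conjecture~\ref{bigthm:conjecture1}) is a conditional implication, not a proof.

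For the looped statement your outline is in the right spirit---reduce via the tower $\delta_n$ to the layer squares---and this matches the paper's proof of Theorem~\ref{bigthm:looped-conjecture2}. But your endgame is more elaborate than what the paper actually does, and the Blakers--Massey invocation is a red herring. The paper does \emph{not} use the full Arone splitting of Remark~\ref{rem:function-space-splitting}, nor any higher Blakers--Massey estimate. Instead it projects $p_j(K,C)$ onto the ``extremal'' piece
\[
q_j(K,C)=j\bigl((K^{[j-1]}\smsh C)\vee(K\smsh C^{[j-1]})\bigr),
\]
a $(2(r+s)+4)$-connected map, which already carries the $\rho_2$ bound. After this reduction and a further $\rho_2$-connected inclusion, the layer square becomes a square of products $F^{\text{\rm st}}(K,\Sigma^{2-j}u_j)\times F^{\text{\rm st}}(C,\Sigma^{2-j}v_j)$, and the stabilization maps are identified \emph{on the nose} as $(a,b)\mapsto(\ast,\Sigma b)$ (horizontal) and $(a,b)\mapsto(\Sigma a,\ast)$ (vertical). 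With that description the square is $\infty$-cartesian by inspection, since $F^{\text{\rm st}}(X,Y)=F^{\text{\rm st}}(\Sigma X,\Sigma Y)$: no connectivity bookkeeping over $i$, $d$, $n$ is needed. Your proposed route via the full splitting and Blakers--Massey could perhaps be made to work, but it replaces a two-line observation with substantial combinatorics, and the step ``apply Blakers--Massey to a square of stable function spaces'' is not well-posed as stated.
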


\begin{rem} This conjecture is in part motivated by
the second author's Ph.D.\ thesis  
where a similar result is proved for embedding spaces 
(cf.\ \cite{JP1},\cite{JP2}).
\end{rem}

As evidence for Conjecture \ref{bigthm:conjecture2}, we have

\begin{bigthm} \label{bigthm:looped-conjecture2} 
Conjecture \ref{bigthm:conjecture2} holds after looping once, i.e.,
the diagram
$$
\xymatrix{
\Omega\cal D(K,C) \ar[r] \ar[d] & \Omega \cal D(\Sigma K,C) \ar[d] \\
\Omega \cal D(K,\Sigma C) \ar[r] &  \Omega\cal D(\Sigma K,\Sigma C)
}
$$
is $(\rho_2-1)$-cartesian.
\end{bigthm}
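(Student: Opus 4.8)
The plan is to bootstrap the statement from Theorem~\ref{bigthm:looped-conjecture1}. The first point to check is that the tower $\{\delta_n(K,C)\}$ of that theorem is natural in the fiberwise suspension functors $A\mapsto\Sigma_C A$ and $A\mapsto\Sigma_K A$; granting this, the four edges of the square in the statement lift to maps of towers, so the looped square is the homotopy inverse limit over $n$ of the squares $\mathbf S_n$ with vertices $\delta_n(K,C)$, $\delta_n(\Sigma K,C)$, $\delta_n(K,\Sigma C)$ and $\delta_n(\Sigma K,\Sigma C)$. The total homotopy fiber of a square is an iterated homotopy fiber, hence commutes with homotopy inverse limits, so the total homotopy fiber of the looped square is $\holim_n$ of the total homotopy fibers of the $\mathbf S_n$. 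Moreover the layers $\delta_n\to\delta_{n-1}$ of all four towers become arbitrarily highly connected as $n\to\infty$ (the coefficient spectrum $\Sigma\mathcal W_n\smsh_{h\Sigma_n}p_n(K,C)$ is $(\rho_{n-1}-1)$-connected and $\rho_{n-1}\to\infty$), so the inverse system of total homotopy fibers is eventually constant in each homotopy degree and there is no $\lim^1$ correction. It therefore suffices to bound, uniformly in $n$, the connectivity of the total homotopy fiber of $\mathbf S_n$.

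For this I would induct up the tower. By Theorem~\ref{bigthm:looped-conjecture1}, $\mathbf S_n$ is a square of homotopy fiber sequences over $\mathbf S_{n-1}$ with fiber the square $\mathbf F_n$ whose vertices are $F^{\mathrm{st}}(K\vee C,\Sigma\mathcal W_n\smsh_{h\Sigma_n}p_n(K,C))$ and its three suspended analogues. Comparing the long exact sequences of the total homotopy fibers of $\mathbf S_n$, $\mathbf S_{n-1}$ and $\mathbf F_n$ gives the elementary estimate that $\mathbf S_n$ is $\min(a_{n-1},b_n)$-cartesian whenever $\mathbf S_{n-1}$ is $a_{n-1}$-cartesian and $\mathbf F_n$ is $b_n$-cartesian. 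Since $\delta_1(K,C)$ is a point, $\mathbf S_1$ is $\infty$-cartesian, so by induction $\mathbf S_n$ is $(\min_{2\le m\le n}b_m)$-cartesian and the looped square is $(\min_{m\ge2}b_m)$-cartesian. The whole theorem thus reduces to the inequality $b_m\ge\rho_2-1$ for all $m\ge2$, together with the equality $b_2=\rho_2-1$ (which also shows optimality of the bound in the statement).

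The heart of the argument is therefore the computation of the cartesian degree $b_m$ of the layer-square $\mathbf F_m$. Here I would invoke the equivariant splitting of Remark~\ref{rem:function-space-splitting}, which expresses each vertex of $\mathbf F_m$ as a product, over $1\le i\le m-1$ and common divisors $d$ of $i$ and $m-i$, of $B(i/d,(m-i)/d)$ copies of $F^{\mathrm{st}}(K\vee C,\Sigma^{d-m+1}\mathcal W_d\smsh_{h\Sigma_d}(K^{[i]}\smsh C^{[m-i]}))$. Splitting the source $K\vee C$ stably into its two wedge summands refines $\mathbf F_m$ into a product of squares of zeroth spaces of function spectra $F(\Sigma^\infty K,\Sigma^{d-m+1}\mathcal W_d\smsh_{h\Sigma_d}(K^{[i]}\smsh C^{[m-i]}))$ and the analogues with $C$ in the source. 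Under the stabilization maps, replacing $K$ by $\Sigma K$ suspends the source once while replacing $K^{[i]}$ by $(\Sigma K)^{[i]}\cong\Sigma^i K^{[i]}$ in the coefficient (and symmetrically for $C$); since $F^{\mathrm{st}}(X,-)$ preserves homotopy limits, the total homotopy fiber of such a small ``suspension square'' equals $F^{\mathrm{st}}(K,-)$ (resp.\ $F^{\mathrm{st}}(C,-)$) applied to the total homotopy fiber of a square of coefficient spectra assembled from $\mathcal W_d$, $K^{[i]}$, $C^{[m-i]}$ and suspension coordinates. A connectivity count --- using that $\mathcal W_d$ is $(-d)$-connected, that $K^{[i]}$ is $(i(r+1)-1)$-connected, that $C^{[m-i]}$ is $((m-i)(s+1)-1)$-connected, and that $\dim(K\vee C)\le\max(k,c)$ --- shows that $\Sigma^{d-m+1}\mathcal W_d\smsh_{h\Sigma_d}(K^{[i]}\smsh C^{[m-i]})$ is $(ir+(m-i)s+1)$-connected and that passing to the total homotopy fiber of the suspension square picks up the additional connectivity forced by the suspension coordinates, so that the contribution to $b_m$ is at least $\rho_m-1\ge\rho_2-1$, with equality for $m=2$ (where $\Sigma\mathcal W_2\smsh_{h\Sigma_2}p_2(K,C)\simeq K\smsh C$ and the count is sharp).

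I expect the genuine obstacle to be precisely this last step. One must identify the stabilization maps on the layers sharply enough to recognise them as the predicted suspension maps on function spectra --- which means tracing through the construction that proves Theorem~\ref{bigthm:looped-conjecture1} together with the explicit form of fiberwise suspension, and in particular pinning down the twists by which $\Sigma_C$ and $\Sigma_K$ act on the $\Sigma_d$-equivariant coefficient spectra --- and then carrying the two-variable Freudenthal bookkeeping through with enough care to certify both that no piece of any $\mathbf F_m$ is less than $(\rho_2-1)$-cartesian and that the value $\rho_2-1$ is attained. A secondary, logically prior point is the naturality of the tower of Theorem~\ref{bigthm:looped-conjecture1} with respect to $\Sigma_C$ and $\Sigma_K$.
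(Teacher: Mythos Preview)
Your overall architecture matches the paper's: reduce to the tower of $\delta_j$-squares via Theorem~\ref{bigthm:looped-conjecture1}, induct on~$j$ using the fiber sequence~\eqref{eq:delta-sequence}, and show that each layer square is sufficiently cartesian. The naturality in the fiberwise suspensions and the induction scheme are exactly what the paper does.

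The substantive divergence is in how you attack the layer squares. You propose to decompose each vertex of $\mathbf F_m$ via the full Arone--Kankaanrinta splitting of Remark~\ref{rem:function-space-splitting}, and then to run a two-variable Freudenthal count on each resulting piece. The paper avoids this entirely. Instead it uses the much cruder projection
\[
p_j(K,C)\;\longrightarrow\; q_j(K,C):= j\bigl(K^{[j-1]}\smsh C \;\vee\; K\smsh C^{[j-1]}\bigr),
\]
retaining only the extreme summands $i=1$ and $i=j-1$. This projection is $(2(r+s)+4)$-connected, so it costs nothing below $\rho_2$. On $q_j$ the equivariance is induced from $\Sigma_{j-1}$, and the restriction of $\cal W_j$ to $\Sigma_{j-1}$ is simply $(\Sigma_{j-1})_+\smsh S^{1-j}$; this collapses the homotopy orbits to a plain $\Sigma^{2-j}$. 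After one further $\rho_2$-connected approximation (restricting $K\vee C$ in the source to the matching summand), the layer square becomes a product of squares of the shape
\[
F^{\text{st}}(K,\Sigma^{2-j}K\smsh C^{[j-1]})\times F^{\text{st}}(C,\Sigma^{2-j}K^{[j-1]}\smsh C)
\]
with the four analogues. On these the stabilization maps are computed to be $(a,b)\mapsto(\ast,\Sigma b)$ and $(a,b)\mapsto(\Sigma a,\ast)$, which makes the square $\infty$-cartesian on the nose.

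What this buys is exactly the step you flag as the genuine obstacle. By throwing away the interior summands $2\le i\le j-2$ up front, the paper never has to track the stabilization through any $\Sigma_d$-twist with $d\ge 2$, and never has to justify a general ``Freudenthal bookkeeping'' estimate of the sort you sketch. Your route through Remark~\ref{rem:function-space-splitting} should in principle yield the sharper bound $b_m\ge\rho_m-1$ you claim, but carrying the stabilization maps through all the $\Sigma_d$-pieces and certifying the connectivity of each small square is genuinely more work than what the paper does; the connectivity count you indicate does not by itself establish that the small squares are highly cartesian without first pinning down the maps. The paper's point is that since only $\rho_2$ is needed, one may as well discard everything that is already $\rho_2$-connected and work with the residue, where the maps are transparent.
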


\begin{convent} We work in the Quillen model category of compactly generated spaces.
Function spaces  are topologized using the compact-open topology. A non-empty space $X$ is $r$-connected
if $\pi_j(X)$ is trivial for $j \le r$. The empty space
is $(-2)$-connected and every non-empty space is $(-1)$-connected.
A map $X \to Y$ of spaces with $Y$ non-empty
is $r$-connected if every one of its homotopy fibers is $(r-1)$-connected.

The equivariant spectra appearing in this paper
are {\it naive:}  a spectrum with $G$-action $E$ is a collection of based $G$-spaces
$\{E_n\}_{n\ge 0}$ equipped with equivariant maps $\Sigma E_n \to E_{n+1}$, where
$G$ acts trivially on the suspension coordinate. Naive $G$-spectra have associated
 homotopy orbits $E_{hG}$ and homotopy fixed points $E^{hG}$. 
 
The category $D(K,C)$ is not small, so in order to 
 take its classifying space one should replace it by an equivalent
 small category. One
 way to do this is to work in a Grothendieck universe. 
 We choose not to address this matter any further.
 \end{convent}

\section{Proof of Theorem \ref{fake_wedge_from_embedding}}

We wish to show that there is no map $A\to K \vee C$ which covers the identity map of $K\times C$
up to homotopy. It will be sufficient to show that the composition
$$
A\to K \times C \to K\smsh C
$$
is non-trivial in reduced singular cohomology.
\medskip

\noindent {\it Case (1).} Suppose that  
$E\to M$ admits a section $\zeta\: M \to E$
such that the composite $M \to E \to C$ is null-homotopic.
The total space of the unit normal sphere bundle of $M$  in $S^n$ is identified with $E$.
Let  $\zeta\: M \to E$ be a section; we can assume by transversality that the image of 
$\zeta$ is contained in $A\subset E$.
We henceforth  identify $M$ with its image under $\zeta$.

Note that $E/M$ is identified with the Thom space
$M^{\xi}$, where $\xi$ denotes bundle given by taking the quotient of
the normal bundle of $M$ by the trivial line bundle defined by the section $\zeta$. 
By choosing
a null homotopy for the composite $M \to A \to C$, we obtain a
map $A \cup CM \to C$, where $A \cup CM$ is the mapping cone of
the inclusion $M \to A$. Identifying $A \cup CM$ with $A/M$, we obtain
a map $A/M \to C$, and it is 
not difficult to see that this map is a homology isomorphism. 
The codimension $\ge 3$ hypothesis
guarantees that $A/M$ and $C$ are $1$-connected. It follows that  $A/M \to C$
is a homotopy equivalence.

The quotient map $q\: A\to A/M$ is injective on cohomology in all degrees, since the inclusion
$M \to A$ is a coretraction. 
Consider the reduced diagonal map
\[
(d)\: A/M @>>> M\smsh A/M
\]
(this is the map of quotients induced by the map of pairs
$(A,M) \to (M\times A,M\times M \cup \ast \times A)$
defined by $x\mapsto (p(x),x)$; here $\ast\in M$ is a choice of basepoint).
With respect to our identifications, the composite 
\[
A@> q >> A/M @> (d) >> M\smsh A/M
\]
is identified up to homotopy with the map $A \to K\smsh C$.
Consequently, it will suffice to prove that the map $(d)$ is non-trivial on 
reduced singular cohomology.

Consider the commutative diagram
$$
\xymatrix{
       E/M \ar[r]^<<<<{(b)} & M_+ \smsh E/M  \ar[dr]^{(c)} & \\
                  &               &     M \smsh E/M \\
       A/M  \ar[r]_<<<<{(d)} \ar[uu]^{(a)}     & M\smsh A/M   \ar[ur]_{(e)} &
}
$$  
where the maps $(a)$ and $(e)$ are inclusions, $(b)$ and $(d)$ are reduced diagonals,   
and $(c)$ is given by sending the additional basepoint to the basepoint of $M$. 
If  $U\in H^{n-m}(E/M)$ is the Thom class, where $m = \dim M$, and
$x \in H^*(M_+)$,  then
the operation $x \mapsto (b)^*(x\times U) = x\cup U$ defines the Thom isomorphism
(where $x \times U$ denotes the reduced external product of $x$ and $U$).
The map $(a)$ is $(n-2)$-connected, so the
Thom class pushes forward to a non-trivial cohomology class $U'\in H^{n-m}(A/M)$.

Let $z\in H^k(M)$ be any non-trivial cohomology class such that $1 < k < m$ (this is 
guaranteed by the assumption that $M$ isn't a homology sphere). 
Then $z\cup U \in  H^{n-m+k}(E/M)$ is non-trivial.  Consequently, so is
$z\cup U' \in H^{n-m+k}(A/M)$, by commutativity of the diagram. 
But $\cup U'$ is the effect of applying $(d)^*$ to the class represented by 
the reduced external product of $z$ and $U'$.
If follows that 
$(d)$ is nontrivial in reduced singular cohomology. 
This completes Case (1).
\medskip

\noindent {\it Case (2): the general case.} We argue by contradiction. Assume
that there is a homotopy equivalence $A \simeq K \vee C$ covering the identity map
of $K\times C$ up to homotopy. We can also think of $A$ as a trivial  wedge on $M$ and
$C$, using the homotopy equivalence $M\simeq K$.  

Think of $M$ as a submanifold of $S^{n+1}$ (using the inclusion 
$S^n \subset S^{n+1}$). Then the complement of $M$ is identified with $\Sigma C$.
Let $E'$ be the boundary of a tubular neighborhood of $M$ in $S^{n+1}$. Then
there is a preferred section $M \to E'$ such that the composite $M \to E' \to \Sigma C$
is null homotopic. To see this, note that $E'$ is a double mapping cylinder of $M$
with itself along $E$ and $\Sigma C$ is a double mapping cylinder of a point with itself along $C$; the map $E' \to \Sigma C$ takes each copy of $M$ to a point.

Hence we may apply Case (1) to obtain in this way a non-trivial 
fake wedge $A'$ on $K$ and $\Sigma C$,
where $A'$ is the effect of removing a point from $E'$. 
If $A$ were trivial then $A'$ would also be trivial (this is because $A'$ is obtained
from $A$ by taking the fiberwise suspension $A$ over $K$.) This gives the contradiction.

\section{Proof of Theorem \ref{bigthm:looped-conjecture1} } 

\subsection*{The moduli space of suspension spectra} 
For an $\Omega$-spectrum $E$ we recall from \cite{Klein_susp} the moduli space
of suspension spectra $\frak M_E$ which was defined to be the realization
of the category $C_E$ whose objects are pairs $(Y,h)$ in which $Y$ is a based
space and $h\: \Sigma^\infty Y \to E$ is a weak (homotopy) equivalence. A morphism
$(Y,h) \to (Y',h')$ is a map of based spaces $f\: Y \to Y'$ such that 
$h'\circ \Sigma^\infty f = h$. Assume in what follows that 
$C_E$ is non-empty.
Let $y = (Y,h)$ be an object of $C_E$. This gives $\frak M_E$ a basepoint.

Assume that $E$ is $r$-connected and has the homotopy type of a CW spectrum
of dimension $\le d$. We will also assume $r \ge 1$.
In \cite[\S6]{Klein_susp} we produced a tower of principal fibrations of based spaces
\[
\cdots \to T_2(y) \to T_1(y) 
\]
and compatible maps $\Omega \frak M_E \to T_j(y)$
such that 
\begin{itemize}
\item $T_1(y)$ is contractible,
\item The map  
$\Omega\frak M_E \to T_j(y)$ is $((j+1)r+1-d)$-connected. In particular, 
the map $\Omega\frak M_E \to \holim_j T_j(y)$ is a weak equivalence.
\item For $j > 1$ there is a homotopy fiber sequence
\begin{equation} \label{eq:principal}
T_j(y) \to T_{j-1}(y) \to 
F^{\text{\rm st}}(Y, \Sigma \cal W_j \smsh_{h\Sigma_j} Y^{[j]})\, .
\end{equation}
\end{itemize}
We will reproduce the homotopy fiber sequence \eqref{eq:principal}
below.

The space $T_j(y)$ is defined to be the space of all lifts filling
in the dotted line in the following diagram
\[
\xymatrix{
&  P_j(Y) \ar[d]\\
Y \ar[r] \ar@{-->}[ur] & P_1(Y)\simeq Q(Y)  \, ,
}
\]
where $P_j(Y)$ is the $j$-th stage of the Goodwillie tower of the identity functor.
The map $T_j(y) \to T_{j-1}(y)$ is obtained
by sending a lift $Y\to P_j(Y)$ to the composite  $Y \to P_j(Y) \to P_{j-1}(Y)$.
The  fiber sequence \eqref{eq:principal} 
is  a consequence of  Goodwillie's observation \cite[lem.~2.2]{Goodwillie3} 
 that $P_j(Y) \to P_{j-1}(Y)$ is a principal
fibration that is classified by a map
$P_{j-1}(Y) \to \Omega^\infty (\Sigma \cal W_j \smsh_{h\Sigma_j} Y^{[j]})$.

We also showed that $\Omega \frak M_E$ is identified up to weak equivalence with
the space of lifts $T_\infty(y)$ of  the diagram
\[
\xymatrix{
&  \holim_j P_j(Y) \simeq Y\ar[d]\\
Y \ar[r] \ar@{-->}[ur] & P_1(Y)  
}
\]
(this uses \cite[2.2.5]{Waldhausen}, or alternatively \cite[\S2]{DK}).

\begin{rem} \label{model} We digress to explain  how \cite[2.2.5]{Waldhausen}, \cite[\S2]{DK} is
 used to give a model for $\Omega \mathfrak M_E$. 
Suppose $\cal C$ denotes a simplicial model category.
Let $c \in \cal C$ be a fibrant and cofibrant object. Consider the category $h\cal C_{(c)}$
consisting of objects of $\cal C$ which are weak equivalent to $C$, where a morphism is 
a weak equivalence. Then the realization of this category,
$|h\cal C_{(c)}|$, coincides with the classifying space $BG(c)$, where $G(c)$ is the
simplicial monoid of homotopy automorphisms of $c$. 

A special case of interest to us occurs when $CÊ= \text{Top}_{/B}$, the simplicial model
category of spaces over $B$, where a morphism $X\to Y$ is a weak equivalence if and only
if it is a weak homotopy equivalence of underlying spaces. In this case $X$ is cofibrant if
and only if its underlying space is a retract of a cell complex and $X$ is fibrant if the structure map
$X\to B$ is a fibration. If $X$ is fibrant and cofibrant, it follows that $|h\text{Top}_{/B,(X)}|$ is a model for the classifying space for the homotopy self-equivalences of $X$ covering the identity of $B$.

In the case appearing in the proof, we may take $B = P_1(Y)$.
Let $Y' \to P_1(Y)$ be a fibrant and cofibrant replacement. Then any endomorphism $Y' \to Y'$ 
is a homology equivalence of underlying spaces since the composite $Y' \to Y' \to Q(Y')$ is
adjoint to the identity map of the suspension spectrum $\Sigma^\infty Y'$.
Furthermore, such endomorphisms are identified with the space of lifts of the map $Y \to P_1(Y)$ through $Y$.
 In particular, when 
$Y$ is $1$-connected, $\Omega\text{Top}_{/P_1(Y),(Y')}$ is identified up to homotopy
with the space of such lifts. Even when $Y$ is not $1$-connected we can still obtain such 
an identification by replacing $Y$ with its plus construction. 
For the details in this case see \cite[prop.~2.3]{Klein_susp}.
\end{rem}

In what follows we set $E = \Sigma^\infty K \times \Sigma^\infty C$.
Define a functor
\[
D(K,C) \to C_E
\]
by $A \mapsto (A,h)$, where the weak equivalence
$h \: \Sigma^\infty A \to E$ is defined by applying $\Sigma^\infty$ to
the structure maps $A\to K$ and $A\to C$.
The space  $K\vee C$ together with the evident
weak equivalence $\Sigma^\infty(K\vee C) \simeq E$
 equips $C_E$ with a basepoint $y$.  

To simplify notation in what follows, we make
an auxiliary definition.

\begin{defn} Suppose $f\:X \to B$ and $g\: Y\to B$ are maps of spaces.
By slight abuse of notation, we let
\[
\xymatrix{
&  X \ar[d]^f\\
Y \ar[r]_g \ar@{-->}[ur] & B  
}
\]
indicate the space of lifts obtained by converting $f$ into a fibration
$E\to B$, replacing $f$ by this fibration and then taking
lifts of the latter diagram.
Equivalently, this is the space of pairs consisting of a map $h'\: Y \to X$
and a choice of commuting homotopy from $f\circ h'$ to $g$.
\end{defn}

Let $\epsilon(K,C)$ be the space 
\[
\xymatrix{
&  K\times C\ar[d]\\
K\vee C \ar[r] \ar@{-->}[ur] & Q(K)\times Q(C)  \, .
}
\]

\begin{lem} \label{sequence} There is a homotopy fiber sequence
\[
\Omega \cal D(K,C) \to \Omega \frak M_E \to 
\epsilon(K,C).
\]
\end{lem}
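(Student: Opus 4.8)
The plan is to first produce a homotopy fibre sequence
\[
\epsilon(K,C) \to \cal D(K,C) \to \frak M_E
\]
(with basepoints the trivial lift, the trivial wedge $K\vee C$, and $y = (K\vee C,h_0)$ respectively) and then to read off the statement of the lemma by rotating it. Here the second map is the one induced by the forgetful functor $F\: D(K,C) \to C_E$ just defined. The key observation is that $F$ forgets only a piece of structure. Write $B = Q(K)\times Q(C) = \Omega^\infty E$ and let $\pi\: K\times C \to B$ be the product of the two stabilization maps; an object of $D(K,C)$ is a space $A$ together with a map $A \to K\times C$ whose composite with $\pi$ is adjoint to a weak equivalence $\Sigma^\infty A \to E$, whereas $F$ retains only $A$ together with that composite $A\to B$ (equivalently, the equivalence $h_A$). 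Thus, once $\pi$ has been converted to a fibration — which is built into the definition of the lift space $\epsilon(K,C)$ — the functor $F$ exhibits $D(K,C)$ as the category of objects of $C_E$ equipped with a chosen lift of their structure map through $\pi$.

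To carry this out I would apply Quillen's Theorem B to $F$ at the object $y$ (equivalently, work inside the model-categorical formalism recalled in Remark \ref{model}). An object of the comma category $F\downarrow y$ is a triple $(A,g\:A\to K\times C,\psi\:A\to K\vee C)$ with $A$ a fake wedge and $h_0\circ\Sigma^\infty\psi$ equal to the stabilization of $A$ determined by $g$; in particular $\psi$ is a homology equivalence, hence (by simple connectivity) a weak equivalence. Forgetting $g$ gives a functor from $F\downarrow y$ to the category of pairs $(A,\psi)$; the latter is contractible, having the terminal object $(K\vee C,\id)$, and the forgetful functor is a Grothendieck fibration whose fibre over $(K\vee C,\id)$ is the space of lifts of the stabilization map $K\vee C\to B$ through the fibration $\pi$ — that is, $\epsilon(K,C)$ — and all of whose fibres are equivalent to it, because a weak equivalence of base spaces induces a homotopy equivalence of lift spaces along a fixed fibration. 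Hence $|F\downarrow y| \simeq \epsilon(K,C)$. The same discussion identifies $|F\downarrow y'|$, for any object $y'$ of $C_E$, with the corresponding lift space, and shows that a morphism $y'\to y''$ induces, under these identifications, precomposition by a weak equivalence; this verifies the hypothesis of Theorem B, so $\epsilon(K,C)$ is the homotopy fibre of $\cal D(K,C)\to\frak M_E$ over $y$ and the fibre sequence above follows. Rotating it (Puppe sequence), the segment $\Omega\cal D(K,C)\to\Omega\frak M_E\to\epsilon(K,C)$ is a homotopy fibre sequence, which is the assertion of the lemma.

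I expect the main obstacle to be the bookkeeping supporting the middle paragraph, and in particular the repeated need to know that the morphisms involved are honest weak equivalences rather than mere homology isomorphisms. Morphisms of $D(K,C)$ are weak equivalences by definition, but morphisms of $C_E$ a priori are only stable equivalences; upgrading them requires the relevant spaces to be simply connected, which is where the standing hypothesis $r,s\ge 1$ is used (it forces $K\vee C$, and hence $E$, to be simply connected), or else one must pass to plus constructions throughout as in Remark \ref{model}. One must also take care to replace $\pi$ by a fibration before extracting fibres — exactly what the definition of $\epsilon(K,C)$ arranges — so that the strict lifts occurring in the comma categories acquire the correct homotopy type, and to observe that, since the homotopy fibre over $y$ only involves the component of $K\vee C$ in $\frak M_E$, every relevant fake wedge may be transported to a fake-wedge structure on $K\vee C$ itself, so that the fake-wedge condition is automatic there. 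Modulo these points, the argument is the standard manipulation of classifying spaces of categories.
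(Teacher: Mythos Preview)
Your strategy --- establish an unlooped fibre sequence $\epsilon(K,C)\to\cal D(K,C)\to\frak M_E$ via Quillen's Theorem~B and then rotate --- is different from, and more ambitious than, the paper's argument. The paper works directly at the looped level: using Remark~\ref{model} it identifies $\Omega\cal D(K,C)$ with the space of self-maps of $K\vee C$ over $K\times C$ and $\Omega\frak M_E$ with the space of self-maps of $K\vee C$ over $Q(K)\times Q(C)$, and then simply observes that post-composition with the inclusion $K\vee C\hookrightarrow K\times C$ makes these two lift spaces the fibre and total space of a fibration with base $\epsilon(K,C)$. No comma categories, no Theorem~B.

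There is, however, a genuine gap in your identification $|F\downarrow y|\simeq\epsilon(K,C)$. The categories $D(K,C)$ and $C_E$ as set up in the paper are \emph{discrete}: a morphism is a single map, not a mapping space. Consequently the fibre of your Grothendieck fibration over the terminal object $(K\vee C,\id)$ is a discrete category with only identity morphisms --- it is just the \emph{set} of strict lifts $g\colon K\vee C\to K\times C$, and its classifying space is that set with the discrete topology, not $\epsilon(K,C)$ with its compact-open topology. Your claim that ``all fibres are equivalent because a weak equivalence of base spaces induces a homotopy equivalence of lift spaces'' holds for \emph{spaces} of lifts, but the categorical fibres here are sets, and precomposition by a weak equivalence that is not a homeomorphism is typically not a bijection between them. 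The topology on mapping spaces enters the classifying space of a discrete weak-equivalence category only through the Dwyer--Kan/Waldhausen mechanism of Remark~\ref{model}; the paper invokes exactly that mechanism, and your argument would need it as well. One can salvage your approach by passing to simplicially enriched versions of $D(K,C)$ and $C_E$ and using an enriched Theorem~B --- done carefully this does produce the unlooped fibre sequence you claim, which would be a genuine strengthening of the lemma in the spirit of the delooping problem discussed in the appendix --- but this requires more than the routine bookkeeping you anticipate.
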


\begin{proof} We can identify $\Omega \cal D(K,C)$ up to weak equivalence
with the space
\[
\xymatrix{
& K\vee C \ar[d]\\
K\vee C \ar[r] \ar@{-->}[ur] & K\times C  \, ,
}
\]
(cf.\ Remark \ref{model}; we
are also using the observation that any such lift $K\vee C \to K\vee C$ 
appearing above is automatically a weak equivalence). Similarly, $\Omega \frak M_E$ is identified with the space
\[
\xymatrix{
&  K\vee C\ar[d]\\
K\vee C \ar[r] \ar@{-->}[ur] & Q(K)\times Q(C)  \, ,
}
\]
(cf.\ \cite[\S5]{Klein_susp}).

The proof is completed by noticing that
the two lifting spaces sit in a fibration sequence in which the base
space is given by $\epsilon(K,C)$.
\end{proof}

\begin{defn} Let $\epsilon_j(K,C)$ be the space of lifts
\[
\xymatrix{
&  P_j(K) \times P_j(C) \ar[d]\\
K\vee C \ar[r] \ar@{-->}[ur] & Q(K)\times Q(C)  \, .
}
\]
\end{defn}

\begin{proof}[Proof of Theorem \ref{bigthm:looped-conjecture1}]
By naturality, we have an evident map
\[
T_j(y) \to \epsilon_j(K,C)\, .
 \]
Define $\delta_j(K,C)$ to be the homotopy fiber of this map.
Then we have a homotopy fiber sequence
 \[
\delta_j(K,C) \to  T_j(y) \to \epsilon_j(K,C)\, .
 \]
More concretely, $\delta_j(K,C)$ may be identified with the space of lifts
\[
\xymatrix{
&  P_j(K\vee C)  \ar[d]\\
K\vee C \ar[r] \ar@{-->}[ur] & P_j(K) \times P_j(C)  \, .
}
\]
By properties of the Goodwillie tower of the identity functor,
the map  $\holim_j \delta_j(K,C) \to \delta_j(K,C)$ is $((j+1)r+1-d)$-connected,
where $r$ is the connectivity of $K \vee C$ and $d = \max(k,c)$ is the homotopy dimension of 
$K \vee C$. Furthermore, $\holim_j \delta_j(K,C)$ is identified with $\Omega \cal D(K,C)$.

By comparing this sequence for the corresponding one for
 $j-1$, we obtain a commutative diagram whose rows and 
 columns form homotopy fiber sequences
 \[
\resizebox{6.5in}{!}
{
 \xymatrix{
 \delta_j(K,C) \ar[r]\ar[d] &  T_j(y) \ar[r]\ar[d] &  \epsilon_j(K,C)\ar[d] \\
 \delta_{j-1}(K,C) \ar[r]\ar[d] &  T_{j-1}(y) \ar[r]\ar[d] &  \epsilon_{j-1}(K,C)\ar[d] \\
 ? \ar[r] & F^{\text{\rm st}}
 (K \vee C,\Sigma \cal W_j \smsh_{h\Sigma_j} (K\vee C)^{[j]})
 \ar[r]_{f} &  
 F^{\text{\rm st}}(K \vee C,\Sigma \cal W_j \smsh_{h\Sigma_j} (K^{[j]} \vee  C^{[j]}))
 }  
 }
 \]
where the space $?$ is given by the homotopy fiber of the map labelled $f$
that is induced by the projection map
$(K\vee C)^{[j]} \to K^{[j]} \vee C^{[j]}$. It follows that 
$?$ is identified with  the stable function space
$F^{\text{\rm st}}(K \vee C,\Sigma \cal W_j \smsh_{h\Sigma_j} p_j(K,C))$.
This establishes the homotopy fiber sequence 
\begin{equation} \label{eq:delta-sequence}
\delta_j(K,C) \to  \delta_{j-1}(K,C) \to F^{\text{\rm st}}(K \vee C,\Sigma \cal W_j \smsh_{h\Sigma_j} p_j(K,C))\, .
\end{equation}
A straightforward calculation we  omit shows 
$F^{\text{\rm st}}(K \vee C,\Sigma \cal W_{j+1} \smsh_{h\Sigma_{j+1}} p_{j+1}(K,C))$
to be $(\rho_{j}-1)$-connected. In particular, the map $\delta_{j+1}(K,C) \to  \delta_{j}(K,C)$
is $(\rho_{j}-1)$-connected. As $\rho_j$ is an increasing function of $j$, it follows
that the map $\Omega \cal D(K,C) \simeq \holim_i \delta_i(K,C) \to \delta_j(K,C)$ is $(\rho_j-1)$-connected.
\end{proof}

\section{Proof of Theorem \ref{bigthm:looped-conjecture2}}

To prove  Theorem \ref{bigthm:looped-conjecture2}, by induction it will
be enough to
show that for each $j \ge 1$ a certain commutative diagram
\begin{equation} \label{eq:delta-diagram}
\xymatrix{
\delta_j(K,C) \ar[r] \ar[d] & \delta_j(K,\Sigma C) \ar[d] \\
\delta_j(\Sigma K,C) \ar[r] & \delta_j(\Sigma K,\Sigma C)
}
\end{equation}
is $(\rho_2 -1)$-cartesian (see below for 
a description of the maps of this diagram). 

It will turn out that these diagrams are functorial in the index
$j$, so by the homotopy fiber sequence \eqref{eq:delta-sequence},  we are reduced to 
showing that the associated diagram
\[
\resizebox{6in}{!}
{\xymatrix{
F^{\text{\rm st}}(K \vee C,\Sigma \cal W_j \smsh_{h\Sigma_j} p_j(K,C)) 
\ar[r] \ar[d] & F^{\text{\rm st}}(K \vee \Sigma C,\Sigma \cal W_j \smsh_{h\Sigma_j} p_j(K,\Sigma C)) \ar[d] \\
F^{\text{\rm st}}(\Sigma K \vee C,\Sigma \cal W_j \smsh_{h\Sigma_j} p_j(\Sigma K,C))
\ar[r] & 
F^{\text{\rm st}}(\Sigma K \vee \Sigma C,\Sigma \cal W_j \smsh_{h\Sigma_j} p_j(\Sigma K,\Sigma C))
}
}
\]
is  $\rho_2$-cartesian (again, we will describe the maps
of the diagram below). 

The proof will use the projection map
\[
p_j(K,C) \to j ((K^{[j-1]} \smsh C)  \vee (K \smsh C^{[j-1]})) \, ,
\]
for $j \ge 3$.
When $j=2$, our convention will be that the target of the projection map is 
$2K\smsh C$. The projection
 map is $(2(r+s) +4)$-connected for all $j \ge 2$.

In what follows we set
\[
q_j(K,C) =  j ((K^{[j-1]} \smsh C)  \vee (K \smsh C^{[j-1]}))
\]
for $j \ge 3$ and when $k = 2$ we set $q_2(K,C) = 2K\smsh C$.
It is elementary to show that the induced map
\[
F^{\text{\rm st}}(K \vee C,\Sigma \cal W_j \smsh_{h\Sigma_j} p_j(K,C)) 
\to F^{\text{\rm st}}(K \vee C,\Sigma \cal W_j \smsh_{h\Sigma_j} 
q_j(K,C))
\]
is  $(2(r+s) + 2 -d)$-connected, $d = \max(k,c)$.
Note that the action of $\Sigma_j$  on $q_j(K,C)$ is given by
inducing up the evident $\Sigma_{j-1}$-actions on the summands $K^{[j-1]} \smsh C$
and $K \smsh C^{[j-1]}$.

Consequently, we are reduced to checking that
the diagram
\begin{equation} 
\label{eq:qk}
\resizebox{6in}{!}
{
\xymatrix{
F^{\text{\rm st}}(K \vee C,\Sigma \cal W_j \smsh_{h\Sigma_j} q_j(K,C)) 
\ar[r] \ar[d] & F^{\text{\rm st}}(K \vee \Sigma C,\Sigma \cal W_j \smsh_{h\Sigma_j} q_j(K,\Sigma C)) \ar[d] \\
F^{\text{\rm st}}(\Sigma K \vee C,\Sigma \cal W_j \smsh_{h\Sigma_j} q_j(\Sigma K,C))
\ar[r] & 
F^{\text{\rm st}}(\Sigma K \vee \Sigma C,\Sigma \cal W_j \smsh_{h\Sigma_j} q_j(\Sigma K,\Sigma C))
}
}
\end{equation}
is $\rho_2$-cartesian.

Set 
\[
u_j(K,C) = K \smsh C^{[j-1]} \quad \text{ and }\quad  v_j(K,C) = K^{[j-1]} \smsh C\, .
\] 
Then $q_j(K,C) = j(u_j(K,C) \vee v_j(K,C))$.
The restriction to $\Sigma_{j-1}$ of the action of $\Sigma_j$ on $\cal W_j$
is identified with the permutation action on $(\Sigma_{j-1})_+\smsh S^{1-j}$ 
(cf.\ Remark \ref{rem:function-space-splitting}). Similarly,
the action of $\Sigma_j$ on $q_j(K,C)$ is given by inducing up from 
the permutation action of $\Sigma_{j-1}$ on $u_j(K,C) \vee v_j(K,C)$. Hence,
\[
F^{\text{\rm st}}(K \vee C,\Sigma \cal W_j \smsh_{h\Sigma_j} q_j(K,C))
\simeq  F^{\text{\rm st}}(K \vee C,\Sigma^{2-j}(u_j(K,C) \vee v_j(K,C)))\, .
\]
Observe that for $j\ge 3$, the inclusion map 
\begin{equation*}
\resizebox{1.0\hsize}{!}{$F^{\text{\rm st}}(K,\Sigma^{2-j} u_j(K,C))  \times F^{\text{\rm st}}(C,\Sigma^{2-j} v_j(K,C)) 
\to 
F^{\text{\rm st}}(K \vee C,\Sigma^{2-j}(u_j(K,C) \vee v_j(K,C)))$}
\end{equation*}
is $\rho_2$-connected.

Then the above reduces us to showing that a certain commutative diagram
\begin{equation} 
\label{eq:uvj}
\resizebox{6in}{!}
{
\xymatrix{
F^{\text{\rm st}}(K,\Sigma^{2-j} u_j(K,C)) {\times} F^{\text{\rm st}}(C,\Sigma^{2-j}v_j(K,C))
\ar[r]\ar[d] & F^{\text{\rm st}}(K,\Sigma^{2-j}u_j(K,\Sigma C)) {\times} F^{\text{\rm st}}(\Sigma C,\Sigma^{2-j} v_j(K,\Sigma C))  \ar[d] \\
F^{\text{\rm st}}(\Sigma K,\Sigma^{2-j} u_j(\Sigma K,C)) {\times} F^{\text{\rm st}}(C,\Sigma^{2-j} v_j(\Sigma K,C))
\ar[r] & F^{\text{\rm st}}(\Sigma K,\Sigma^{2-j} u_j(\Sigma K,\Sigma C)) {\times} F^{\text{\rm st}}(\Sigma C,\Sigma^{2-j}v_j(\Sigma K,\Sigma C)) 
}
}
\end{equation}
is $\rho_2$-cartesian. We will explain the maps of this diagram below, and we will
show that the diagram is in fact $\infty$-cartesian. This will complete the proof.
 
For $j \ge 2$, a point of $F^{\text{\rm st}}(K,\Sigma^{2-j} u_j(K,C)) \times F^{\text{\rm st}}(C,\Sigma^{2-j} v_j(K,C))$  is given by
a pair of stable maps 
\[
a\: K \to \Sigma^{2-j} K\smsh C^{[j-1]} \quad \text{ and } \quad
b\: C \to \Sigma^{2-j} K^{[j-1]} \smsh C\, .
\]
Then we claim that the horizontal arrows  
of the diagram \eqref{eq:uvj} are given by the operation $(a,b)\mapsto (\ast,\Sigma b)$. Similarly, we claim that the vertical arrows are given by
the operation $(a,b) \mapsto (\Sigma a,\ast)$. 
From this description, and the fact that
$F^{\text{st}}(X,Y) = F^{\text{st}}(\Sigma X,\Sigma Y)$,
 it is clear
that diagram \eqref{eq:uvj} is $\infty$-cartesian.

We now fill in some of the details in the above argument. 
We first need to define the stabilization maps appearing
in diagram \eqref{eq:delta-diagram}. Then we
we need to show that the following diagram commutes
\begin{equation}\label{eq:to-check}
\resizebox{6in}{!}
{
\xymatrix{
\Omega \cal D(K,C) \ar[r] \ar[d] & \delta_{j-1}(K,C) \ar[d] \ar[r] & F^{\text{\rm st}}(K,\Sigma^{2-j} u_j(K,C)) \times F^{\text{\rm st}}(C,\Sigma^{2-j} v_j(K,C))\ar[d]  \\
\Omega \cal D(K,\Sigma C) \ar[r] & \delta_{j-1}(K,\Sigma C) \ar[r] & F^{\text{\rm st}}(K,\Sigma^{2-j} u_j(K,\Sigma C)) \times F^{\text{\rm st}}(C,\Sigma^{2-j} v_j(K,\Sigma C))
}
}
\end{equation}
where the left vertical map is the given stabilization map and the right vertical map
is the one given above (i.e., $(a,b) \mapsto (\ast,\Sigma b)$). 
The middle vertical map is a map of diagram \eqref{eq:delta-diagram}
with a change of index from $j$ to $j-1$.
We also need to
show that a diagram analogous to diagram \eqref{eq:to-check}
that involves the other kind of stabilization map 
commutes. We will omit that argument since it is essentially the same.

We now define the map $\delta_j(K,C) \to \delta_j(K,\Sigma C)$.
A point of  $\delta_j(K,C)$ is represented by a map $g\: K\vee C \to P_j(K\vee C)$ 
which projects to the canonical map $K \times C \to  P_j K \times P_j C$ up to a
choice of commuting homotopy, which for reasons of clutter will be ignored
in our description.  We map $g$ to the composite
\begin{equation} \label{eq:sigma-g}
\Sigma_K (K\vee C) \to \Sigma_{P_jK} P_j(K \vee C) \to
P_j(\Sigma_K(K \vee C)) \, ,
\end{equation}
where the second map in the composite arises from applying the functor $P_j$ to the homotopy pushout diagram
\[
\xymatrix{
K \vee C \ar[r] \ar[d] & K \ar[d] \\
K \ar[r] & \Sigma_K(K\vee C)
}
\]
to get  
\begin{equation} \small \label{eq:C-stable}
\text{hocolim}(P_j K \leftarrow P_j(K\vee C) \to P_jK) \to P_j(\Sigma_K(K\vee C))\, .
\end{equation}
Note that $\Sigma_{P_j K}P_j(K\vee C)$ coincides with the domain of the map 
\eqref{eq:C-stable}.
We are also implicitly using the identification 
\[
\Sigma_K(K\vee C) = K \vee \Sigma C
\]
to identify the map \eqref{eq:sigma-g} with an element of $\delta_j(K,\Sigma C)$.
A similar description defines the other stabilization map $\delta_j(K,C) \to 
\delta_j(\Sigma K,C)$
It is now  straightforward to  check that  diagram
\eqref{eq:delta-diagram} is commutative and is compatible with passage
from $j$ to $j-1$. It is also not difficult to check that the left hand square
of diagram \eqref{eq:to-check} is commutative, and we will omit these details. 
It remains to verify the commutativity
of the right hand square of diagram \eqref{eq:to-check}.
This will involve a long and tedious diagram chase and we will be content
providing an outline of the argument.

By construction, the map
\[
\delta_{j-1}(K,C) \to F^{\text{\rm st}}(K,\Sigma^{2-j} u_j(K,C)) \times F^{\text{\rm st}}(C,\Sigma^{2-j}v_j(K,C))
\]
factors through $F^{\text{\rm st}}(K\vee C,\Sigma \cal W_j\smsh_{h\Sigma_j}(K\vee C)^{[j]})$.
We will first describe how the map 
\[
\delta_{j-1}(K,C) \to F^{\text{\rm st}}(K\vee C,\Sigma \cal W_j\smsh_{h\Sigma_j}(K\vee C)^{[j]})
\]
behaves with respect to stabilization by constructing a stabilization map for the target.

Let $L_jX = \Sigma \cal W_j\smsh_{h\Sigma_j}X^{[j]}$.
Let $g\: K \vee C \to P_{j-1}(K\vee C)$ represent a point of $\delta_{j-1}(K,C)$
Then the chain of maps
\[
K\vee C \overset g\to P_{j-1}(K\vee C) \to L_j(K\vee C)
\]
gives rise to a commutative diagram
\[
\xymatrix{
\Sigma_K(K\vee C) \ar[r] \ar@{=}[d] & \Sigma_{P_{j-1}K} P_{j-1}(K\vee C) \ar[r] \ar[d] & 
\Sigma_{L_jK} L_j(K \vee C)\ar[d]  \\
\Sigma_K(K\vee C)  \ar[r] & P_{j-1}(\Sigma_K (K\vee C)) \ar[r] & 
L_j(\Sigma_K(K\vee C))\, ,
}
\]
The bottom left map of this diagram represents the stabilization of $g$ lying in
$\delta_{j-1}(K,\Sigma C)$. The bottom composite represents the image of
this with respect to the map $\delta_{j-1}(K,\Sigma C) \to 
F^{\text{\rm st}}(K\vee \Sigma C,L_j(K\vee \Sigma C))$. The commutativity
of this diagram implies the commutativity of  the diagram 
\[
\xymatrix{
\delta_{j-1}(K,C) \ar[r]\ar[d] & F^{\text{\rm st}}(K\vee C,L_j(K\vee C)) \ar[d]\\
\delta_{j-1}(K,\Sigma C) \ar[r] & F^{\text{\rm st}}(K\vee \Sigma C,L_j(K\vee \Sigma C))
}
\] 
where the vertical maps are stabilization maps and the right vertical map 
is given by sending a stable map $K\vee C \to L_j(K\vee C)$
to the stable composite 
\[
\Sigma_K(K\vee C) \to \Sigma_{L_jK} L_j(K\vee C) \to L_j(\Sigma_K (K\vee C))\, .
\]
Finally, a straightforward 
check that we omit shows 
that the diagram 
\[
\resizebox{5in}{!}
{
\xymatrix{
F^{\text{\rm st}}(K\vee C,L_j(K\vee C)) \ar[r] \ar[d] & 
F^{\text{\rm st}}(K,\Sigma^{2-j} u_j(K,C)) \times F^{\text{\rm st}}(C,\Sigma^{2-j} v_j(K,C)) \ar[d] \\
F^{\text{\rm st}}(K\vee \Sigma C,L_j(K\vee \Sigma  C) \ar[r]  & 
F^{\text{\rm st}}(K,\Sigma^{2-j} u_j(K,\Sigma C)) \times F^{\text{\rm st}}(\Sigma C,\Sigma^{2-j} v_j(K,\Sigma C))
}
}
\]
is commutative, where the top horizontal map is induced by the  
projection $L_j(K\vee C) \to \Sigma^{2-j} u_j(K,C) \times \Sigma^{2-j} u_j(K,C)$
and the right vertical map is given by
$(a,b) \mapsto (\ast,\Sigma b)$.

\section{Appendix: discussions with Arone and Thomason in 1995}

In Bielefeld in early 1995, the first author was involved in
a series of discussions with 
Greg Arone and Bob Thomason about the possibility of developing a
theory of $E_\infty$-coalgebras over the sphere spectrum to
serve as a recognition principle for deciding when a spectrum is weakly
equivalent to
a suspension spectrum. Unfortunately, Bob Thomason
passed away in November, 1995 before
we could  get such a theory up and running.
However, in \cite{Klein_susp} the first author took the first step in this
direction by constructing a theory of coalgebra
spectra in the metastable range (cf.\ below). In \cite{Klein_emb_sphere} we used this theory to give concrete results about embeddings. 
We will now try to outline some aspects of this project because it is relevant to the 
 topic of this paper.

First observe that a based space $X$ gives rise  to a suspension
spectrum $\Sigma^\infty X$. The diagonal map $\Delta\: X \to X \smsh X$ 
induces a diagonal $\Delta\: \Sigma^\infty X \to \Sigma^\infty X\smsh 
\Sigma^\infty X$ which is commutative up to all higher coherences. This gives
$\Sigma^\infty X$ the structure of a coalgebra over the sphere spectrum
(without co-unit) that is coherently homotopy commutative.
Conversely, suppose we could define a category of $E_\infty$-coalgebras over $S^0$.
Then one might hope that
 the functor $X \to \Sigma^\infty X$  induces an equivalence
of associated homotopy categories of $1$-connected objects.

The vague conjectural idea is that whatever an $E_\infty$-coalgebra is, it should amount
in some way to a spectrum admitting a co-action by the $E_\infty$-operad. The filtration
of $E\Sigma_\infty$ by the $E\Sigma_n$ should then somehow correspond to the stages
of the Goodwillie tower of the identity functor 
from spaces to spaces. More precisely, if 
we start with an $E_\infty$-coalgebra structure on a spectrum 
$E$, then there should be a functorially
associated tower of fibrations of
based spaces $\cdots \to P_2E \to P_1E$ in which 
\begin{itemize}
\item $P_1E  = \Omega^\infty E$;
\item For $n \ge 2$, the $n$-th layer $L_nE = \text{fiber}(P_nE \to P_{n-1}E)$ is given by 
the infinite loop space
$\Omega^\infty (\cal W_n \smsh_{h\Sigma n} E^{[n]})$;
\item If  $X = \lim_n P_n E$ then the map $X\to \Omega^\infty E$ is adjoint
to a weak equivalence when $E$ is $1$-connected.
\end{itemize}
Furthermore, the partial tower $P_n E \to \cdots \to P_1 E$ should functorially
only depend on the co-action by the portion of the $E_\infty$-operad that involves
$E\Sigma_j$ for $j \le n$. That is, the partial tower should depend on less
structure: it should only require a choice of coaction on  $E$ of 
the {\it $n$-truncation} of the $E_\infty$-operad in the sense of 
\cite{Fiedorowicz}, \cite[defn.~2.17]{Ching}. We will be somewhat
more precise about this below.
  
First consider the  $n=2$ case. We have a norm sequence
\[
D_2E \to (E\smsh E)^{h\Sigma_2} \to (E\smsh E)^{t\Sigma_2}
\]
in which $D_2E$ is the quadratic construction, $(E\smsh E)^{h\Sigma_2}$
is the homotopy fixed points of $\Sigma_2$ acting on $E\smsh E$ and
$(E\smsh E)^{t\Sigma_2}$ is the Tate construction of $\Sigma_2$ 
acting on $E\smsh E$.
There is also a natural map $q_2: E \to (E\smsh E)^{t\Sigma_2}$. 
We now define a $2$-truncated $E_\infty$-coalgebra
structure on $E$ to be a map $E \to (E\smsh E)^{h\Sigma_2}$ 
which factorizes $q_2$.
What we have just defined is equivalent
to the notion of diagonal given in \cite{Klein_susp} and this
is a good enough notion in the metastable range:
using this lift, we were able to construct a
partial tower $P_2E \to \Omega^\infty E$ and we showed that 
the adjoint map $\Sigma^\infty P_2E \to E$ 
is an equivalence in the metastable range.

The above suggests that an $n$-truncated 
$E_\infty$-coalgebra structure on a spectrum $E$ might have
an inductive definition. Given an $(n-1)$-truncated 
$E_\infty$-coalgebra structure on $E$, we conjecture
that one can associate to it 
a natural map $q_n\: E \to (E^{[n]})^{t\Sigma_n}$
(this is easy to do when $E$ is a suspension spectrum, but we
do not yet know how to do this in general).
Assuming this can be done,
we define an $n$-truncated structure which extends
the given $(n-1)$-truncated structure
by choosing a factorization $E \to (E^{[n]})^{h\Sigma_n}
\to (E^{[n]})^{t\Sigma_n}$. We surmise that this definition
enables one to construct an $n$-stage tower $P_nE \to \cdots \to P_1E = \Omega^\infty E$
such that $\Sigma^\infty P_n E \to E$ is $(nr+r-1)$-connected when $E$ is $r$-connected.

If this idea works, 
then we should be able to study the moduli space of suspension structures
$\frak M_E$ by constructing
a sequence of approximations  $\cdots \to \frak M_{E,n} \to \frak M_{E,n-1}$
in which $\frak M_E = \lim_n \frak M_{E,n}$ and  $\frak M_{E,n}$ 
is a moduli space of $n$-truncated $E_\infty$-structures on $E$ (when 
$\frak M_E$ is non-empty the space
$\frak M_{E,n}$ ought to be a non-connective delooping of 
the space $T_n(y)$ appearing above).
Moreover, there
should be a homotopy fiber sequence
\[
\Omega^\infty F(E,\Sigma \cal W_n\smsh_{h\Sigma_n} E^{[n]}) \to
 \frak M_{E,n} \to \frak M_{E,n-1} \, .
 \]
We could then use  $\frak M_{E,n}$ in place of $T_n(y)$ in the proof
of Theorem \ref{bigthm:looped-conjecture1} to 
obtain a delooping of the proof.

\end{document}